\newcommand{\outprod}{\ensuremath{\boxtimes}}
\newcommand{\kprod}{\ensuremath{\otimes}}
\newcommand{\krprod}{\ensuremath{\odot}}
\newcommand{\simi}{\ensuremath{\mathsf{sim}}}
\newcommand{\BCPC}{\ensuremath{\text{BCPC}\xspace}}
\newcommand{\BCPCm}{\ensuremath{\text{BCPC}_{\max}\xspace}}
\newcommand{\Saboteur}{\algname{SaBoTeur}\xspace}
\newcommand{\SaboteurIU}{\algname{SaBoTeur+itUp}\xspace}
\newcommand{\BCPALS}{\algname{BCP\_ALS}\xspace}
\newcommand{\walknmerge}{\algname{Walk'n'Merge}\xspace}
\newcommand{\ParCube}{\algname{ParCube}\xspace}
\newcommand{\ParCuber}{\algname{ParCube}$_{0/1}$\xspace}
\newcommand{\CPAPR}{\algname{CP\_APR}\xspace}
\newcommand{\CPAPRr}{\algname{CP\_APR}$_{0/1}$\xspace}
\newcommand{\ResolverL}{\datasetname{ResolverL}\xspace}
\newcommand{\Resolver}{\datasetname{Resolver}\xspace}
\newcommand{\Enron}{\datasetname{Enron}\xspace}
\newcommand{\TracePort}{\datasetname{TracePort}\xspace}
\newcommand{\Facebook}{\datasetname{Facebook}\xspace}
\newcommand{\Delicious}{\datasetname{Delicious}\xspace}
\newcommand{\Lastfm}{\datasetname{Last.FM}\xspace}
\newcommand{\Mag}{\datasetname{MAG}\xspace}
\newcommand{\Movielens}{\datasetname{MovieLens}\xspace}
\newcommand{\Yago}{\datasetname{YAGO}\xspace}
\newlength{\figwidth}
\newlength{\subfigwidth}
\newlength{\figsep}
\newlength{\subfigspace}
\begin{document}
%\mainmatter
%\title{Clustering Boolean Tensors}
%\author{Saskia Metzler and Pauli Miettinen}
%\titlerunning{Clustering Boolean Tensors}
%\authorrunning{Metzler and Miettinen}
%\institute{Max Planck Institut f\"ur Informatik\\ Saarbr\"ucken, Germany\\ \email{\{saskia.metzler, pauli.miettinen\}@mpi-inf.mpg.de}}

%
% --- Author Metadata here ---
%\conferenceinfo{KDD}{'14 New York, NY USA}
%\CopyrightYear{2007} % Allows default copyright year (20XX) to be over-ridden - IF NEED BE.
%\crdata{0-12345-67-8/90/01}  % Allows default copyright data (0-89791-88-6/97/05) to be over-ridden - IF NEED BE.
% --- End of Author Metadata ---

\title{Clustering Boolean Tensors}

%\numberofauthors{2} %  in this sample file, there are a *total*
% of EIGHT authors. SIX appear on the 'first-page' (for formatting
% reasons) and the remaining two appear in the \additionalauthors section.
%
\author{
Saskia Metzler\\
Max-Planck-Institut f\"ur Informatik\\
Saarbr\"ucken, Germany\\
\texttt{smetzler@mpi-inf.mpg.de}
 \and 
Pauli Miettinen \\
Max-Planck-Institut f\"ur Informatik\\
Saarbr\"ucken, Germany\\
\texttt{pmiettin@mpi-inf.mpg.de}
% \alignauthor
% Saskia Metzler\\
%        \affaddr{Max-Planck-Institut f\"ur Informatik}\\
%        \affaddr{Saarbr\"ucken, Germany}\\
%        \email{saskia.metzler@mpi-inf.mpg.de}
% \alignauthor
% Pauli Miettinen \\
% \affaddr{Max-Planck-Institut f\"ur Informatik}\\
% \affaddr{Saarbr\"ucken, Germany}\\
% \email{pauli.miettinen@mpi-inf.mpg.de}
}

\maketitle
\begin{abstract}
Tensor factorizations are computationally hard problems, and in particular, are often significantly harder than their matrix counterparts. In case of Boolean tensor factorizations -- where the input tensor and all the factors are required to be binary and we use Boolean algebra -- much of that hardness comes from the possibility of overlapping components. Yet, in many applications we are perfectly happy to partition at least one of the modes. In this paper we investigate what consequences does this partitioning have on the computational complexity of the Boolean tensor factorizations and present a new algorithm for the resulting clustering problem. This algorithm can alternatively be seen as a particularly regularized clustering algorithm that can handle extremely high-dimensional observations. We analyse our algorithms with the goal of maximizing the similarity and argue that this is more meaningful than minimizing the dissimilarity. As a by-product we obtain a PTAS and an efficient 0.828-approximation algorithm for rank-1 binary factorizations. Our algorithm for Boolean tensor clustering achieves high scalability, high similarity, and good generalization to unseen data with both synthetic and real-world data sets.
\end{abstract}

\section{Introduction}
\label{sec:intro}
Tensors, multi-way generalizations of matrices, are becoming more common in data mining. Binary 3-way tensors, for example, can be used to record ternary relations (e.g.\ source IP---target IP---target port in network traffic analysis), a set of different relations between the same entities (e.g.\ subject---relation---object data for information extraction), or  different instances of the same relation between the same entities (e.g.\ the adjacency matrices of a dynamic graph over time, such as who sent email to whom in which month).  Given such data, a typical goal in data mining is to find some structure and regularities from it. To that end, tensor decomposition methods are often employed, and if the data is binary, it is natural to restrict also the factors to be binary. This gives rise to the various Boolean tensor decompositions that have recently been studied \citep[e.g.][]{miettinen11boolean,erdos13walknmerge,cerf13closed}. 

Finding good Boolean tensor decompositions is, however, computationally hard. Broadly speaking, this hardness stems from the complexity caused by the overlapping factors. But when the existing Boolean tensor decomposition algorithms are applied to real-world data sets, they often return factors that are non-overlapping in at least one mode. Typically the mode where these non-overlapping factors appear is unsurprising given the data: in the above examples, it would be target ports in network data, relations in information extraction data, and the time (month) in the email data -- in all of these cases, this mode has somewhat different behavior compared to the other two. 

These observations lead to the question: what if we restrict one mode to non-overlapping factors? This removes one of the main sources of the computational complexity -- the overlapping factors -- from one mode, so we would expect the problem to admit better approximability results. We would also expect the resulting algorithms to be simpler. Would the computationally simpler problem transfer to better results? In this paper we study all these questions and, perhaps surprisingly, give an affirmative yes answer to all of them.

Normally, the quality of a clustering or tensor decomposition is measured using the distance of the original data to its representation. In Section~\ref{sec:simil-vs.-diss} we argue, however, that for many data mining problems (especially with binary data), measuring the quality using the similarity instead of distance leads to a more meaningful analysis. Motivated by this, the goal of our algorithms is to maximize the similarity (Section~\ref{sec:algorithm}) instead of minimizing the dissimilarity. As a by-product of the analysis of our algorithms, we also develop and analyse algorithms for maximum-similarity binary rank-1 decompositions in Section~\ref{sec:binary-rank-1}. We show that the problem admits a polynomial-time approximation scheme (PTAS) and present a scalable algorithm that achieves a $2(\sqrt{2} - 1) \approx 0.828$ approximation ratio. The experimental evaluation, in Section~\ref{sec:experiments}, shows that our algorithm achieves comparable representations of the input tensor when compared to methods that do (Boolean or normal) tensor CP decompositions -- even when our algorithm is solving a more restricted problem -- while being generally faster and admitting good generalization to unseen data.

\section{Preliminaries}
\label{sec:preliminaries}

Throughout this paper, we indicate vectors as bold lower\-/case letters ($\vec{v}$), matrices as bold upper\-/case letters ($\matr{M}$), and tensors as bold\ upper\-/case calligraphic letters ($\tens{T}$). 
Element $(i,j,k)$ of a 3-way tensor $\tens{X}$ is denoted as $x_{ijk}$. A colon in a subscript denotes taking that mode entirely; for example, $\matr{X}_{::k}$ is the $k$th \emph{frontal slice} of $\tens{X}$ ($\matr{X}_k$ in short).

A tensor can be \emph{unfolded} into a matrix by arranging its fibers (i.e. its columns, rows, or tubes in case of a 3-way tensor) as columns of a matrix. For a \emph{mode-$n$ matricization}, mode-$n$ fibers are used as the columns and the result is denoted by $\matr{X}_{(n)}$.

The outer product of vectors is denoted by $\outprod$. For vectors $\vec{a}$, $\vec{b}$, and $\vec{c}$  of length $n$, $m$, and $l$, $\tens{X}=\vec{a}\outprod\vec{b}\outprod\vec{c}$ is an \byby{n}{m}{l} tensor with $x_{ijk} = a_ib_jc_k$.

For a tensor $\tens{X}$, $\abs{\tens{X}}$ denotes its number of non-zero elements. The Frobenius norm of a 3-way tensor $\tens{X}$ is $\norm{\tens{X}}=\sqrt{\sum_{i,j,k}x_{ijk}^2}$. If $\tens{X}$ is binary, $\abs{\tens{X}} = \norm{\tens{X}}^2$. The \emph{similarity} between two \byby{n}{m}{l} binary tensors $\tens{X}$ and $\tens{Y}$ is defined as $\simi(\tens{X}, \tens{Y}) = nml - \abs{\tens{X} - \tens{Y}}$.

The \emph{Boolean tensor sum} of binary tensors $\tens{X}$ and $\tens{Y}$ is defined as $(\tens{X}\lor\tens{Y})_{ijk} = x_{ijk}\lor y_{ijk}$.
For binary matrices $\matr{X}$ and $\matr{Y}$ where $\matr{X}$ has $r$ columns and $\matr{Y}$ has $r$ rows their \emph{Boolean matrix product}, $\matr{X}\bprod\matr{Y}$, is defined as $(\matr{X}\bprod\matr{Y})_{ij} = \bigvee_{k=1}^r x_{ik}y_{kj}$. The \emph{Boolean matrix rank} of a binary matrix $\matr{A}$ is the least $r$ such that there exists a pair of binary matrices $(\matr{X}, \matr{Y})$ of inner dimension $r$ with $\matr{A}=\matr{X}\bprod\matr{Y}$.

A binary matrix $\matr{X}$ is a \emph{cluster assignment matrix} if each row of $\matr{X}$ has exactly one non-zero element. In that case the Boolean matrix product corresponds to the regular matrix product, 
\begin{equation}
\label{eq:clustprod}
\matr{X}\bprod\matr{Y} = \matr{X}\matr{Y}\; .
\end{equation}

We can now define the Boolean tensor rank and two decompositions: Boolean CP and Tucker3.

\begin{definition}
  \label{def:Brank}
  The \emph{Boolean tensor rank} of a 3-way binary tensor $\tens{X}$, $\rankB(\tens{X})$, is the least integer $r$ such that there exist $r$ triplets of binary vectors ($\vec{a}_i$, $\vec{b}_i$, $\vec{c}_i$) with $\tens{X} = \bigvee_{i=1}^r \vec{a}_i \outprod \vec{b}_i \outprod \vec{c}_i$\,. 
\end{definition}

The low-rank Boolean tensor CP decomposition is defined as follows.
\begin{problem}[Boolean CP]
 \label{def:BCP}
%  \hspace{0.1em}\newline
  Given an \byby{n}{m}{l} binary tensor $\tens{X}$ and an integer $r$, find binary matrices $\matr{A}$ (\by{n}{r}), $\matr{B}$ (\by{m}{r}), and $\matr{C}$ (\by{l}{r}) such that they minimize $\abs{\tens{X}-\bigvee_{i=1}^r\vec{a}_i\outprod\vec{b}_i\outprod\vec{c}_i}$.
\end{problem}

The standard (non-Boolean) tensor rank and CP decomposition are defined analogously~\citep{kolda09tensor}.
Both finding the least error Boolean CP decomposition and deciding the Boolean tensor rank are NP-hard~\citep{miettinen11boolean}.  Following \citet{kolda09tensor}, we use $[[\matr{A},\matr{B},\matr{C}]]$ to denote the normal 3-way CP decomposition and $[[\matr{A},\matr{B},\matr{C}]]_B$  for the Boolean CP decomposition.

Let $\matr{X}$ be \by{n_1}{m_1} and $\matr{Y}$ be \by{n_2}{m_2} matrix. Their \emph{Kronecker (matrix) product}, $\matr{X}\kprod\matr{Y}$, is the \by{n_1n_2}{m_1m_2} matrix defined by
\[
\matr{X}\kprod\matr{Y}=
\left(\begin{matrix}
  x_{11}\matr{Y} & x_{12}\matr{Y} & \cdots & x_{1m_1}\matr{Y}\\
  x_{21}\matr{Y} & x_{22}\matr{Y} & \cdots & x_{2m_1}\matr{Y}\\
  \vdots            &    \vdots        & \ddots & \vdots              \\
  x_{n_11}\matr{Y} & x_{n_12}\matr{Y} & \cdots & x_{n_1m_1}\matr{Y} 
\end{matrix}\right).
\]

The \emph{Khatri--Rao (matrix) product} of $\matr{X}$ and $\matr{Y}$ is defined as ``column-wise Kronecker''. That is, $\matr{X}$ and $\matr{Y}$ must have the same number of columns ($m_1=m_2=m$), and their Khatri--Rao product $\matr{X}\krprod\matr{Y}$ is the \by{n_1n_2}{m} matrix defined as
\begin{equation}
\label{eq:khatri-rao}
\matr{X}\krprod\matr{Y}=
\begin{pmatrix}
  \vec{x}_1\kprod\vec{y}_1, \vec{x}_2\kprod\vec{y}_2, \ldots, \vec{x}_m\kprod\vec{y}_m
\end{pmatrix}\; .
\end{equation}
Notice that if $\matr{X}$ and $\matr{Y}$ are binary, so are $\matr{X}\kprod\matr{Y}$ and $\matr{X}\krprod\matr{Y}$.

We can write the Boolean CP as matrices using unfolding and matrix products:
\begin{equation}
  \begin{aligned}
  \label{eq:BCP:unfold}
   \matr{X}_{(1)} &= \matr{A}\bprod(\matr{C}\krprod\matr{B})^T \\
    \matr{X}_{(2)} &= \matr{B}\bprod(\matr{C}\krprod\matr{A})^T \\
    \matr{X}_{(3)} &= \matr{C}\bprod(\matr{B}\krprod\matr{A})^T \, .
  \end{aligned}
\end{equation}

The Boolean Tucker3 decomposition can be seen as a generalization of the Boolean CP decomposition:
\begin{problem}[Boolean Tucker3]
  \label{def:BTucker}
  Given an $n$-by-$m$-by-$l$ binary tensor $\tens{X}$ and three integers $r_1$, $r_2$, and $r_3$, find a binary  \byby{r_1}{r_2}{r_3} core tensor $\tens{G}$ and binary factor matrices $\matr{A}$ (\by{n}{r_1}), $\matr{B}$ (\by{m}{r_2}), and $\matr{C}$ (\by{l}{r_3}) such that they minimize
  \begin{equation}
    \label{eq:BTucker}
    \abs*[Big]{\tens{X} - \bigvee_{\alpha=1}^{r_1}\bigvee_{\beta=1}^{r_2}\bigvee_{\gamma=1}^{r_3} g_{\alpha\beta\gamma}\,\vec{a}_{\alpha}\outprod\vec{b}_{\beta}\outprod\vec{c}_{\gamma}}\, .
  \end{equation}
\end{problem}

We use $[[\tens{G}; \matr{A}, \matr{B}, \matr{C}]]_B$ as the shorthand notation for the Boolean Tucker3 decomposition.

%%% Local Variables: 
%%% mode: latex
%%% TeX-master: "main"
%%% End: 

\section{Problem Definitions}
\label{sec:problemDef}

We consider the variation of \emph{tensor clustering} where the idea is to cluster one mode of a tensor and potentially reduce the dimensionality of the other modes. Another common approach is to do the co-clustering equivalent, that is, to cluster each mode of the tensor simultaneously. The former is the approach taken, for example, by \citet{huang08simultaneous}, while the latter appears for instance in \citet{jegelka09approximation}. Unlike either of these methods, however, we concentrate on binary data endowed with the Boolean algebra. 

\subsection{General Problem}
\label{sec:general-problem}

Assuming a 3-way tensors and that we do the clustering in the last mode, we can express the general \emph{Boolean tensor clustering} (BTC) problem as follows:

\begin{problem}[BTC]
  \label{prob:btc}
  Given a binary \byby{n}{m}{l} tensor \tens{X} and integers $k_1$, $k_2$, and $k_3$, find a \byby{k_1}{k_2}{k_3} binary tensor $\tens{G}$ and matrices  $\matr{A}\in\{0,1\}^{n\times k_1}$, $\matr{B}\in\{0,1\}^{m\times k_2}$, and $\matr{C}\in\{0,1\}^{l\times k_3}$ such that $\matr{C}$ is a cluster assignment matrix  and that $[[\tens{G}; \matr{A}, \matr{B}, \matr{C}]]_B$ is a good Boolean Tucker decomposition of $\tens{X}$.
\end{problem}

If we let $k_1 = n$ and $k_2 = m$, we obtain essentially a traditional (binary) clustering problem: Given a set of $l$ binary matrices $\{\matr{X}_{1}, \matr{X}_{2}, \ldots, \matr{X}_{l}\}$ (the frontal slices of $\tens{X}$), cluster these matrices into $k_3$ clusters, each represented by an \by{n}{m} binary matrix $\matr{G}_i$ (the frontal slices of the core tensor $\tens{G}$); the factor matrices $\matr{A}$ and $\matr{B}$ can be left as identity matrices. We call this problem \emph{Unconstrained BTC}. Writing each of the involved matrices as an $nm$-dimensional vector, we obtain the following problem, called the \emph{Discrete Basis Partitioning problem} (DBPP) in~\citep{miettinen08discrete}:
  \begin{problem}[{\citealt[DBPP,][]{miettinen08discrete}}]
    \label{prob:dbpp}
    Given an \by{l}{nm} binary matrix $\matr{X}$ and a positive integer $k$, find matrices $\matr{C}\in\{0,1\}^{l\times k}$ and $\matr{G}\in\{0,1\}^{k\times nm}$ such that $\matr{C}$ is a cluster assignment matrix and $\matr{C}$ and $\matr{G}$ minimize $\norm{\matr{X} - \matr{C}\matr{G}}$.
  \end{problem}

\citet{miettinen08discrete} show that DBPP is \NP-hard and give a $(10+\varepsilon)$-approximation algorithm that runs in polynomial time w.r.t. $n$, $m$, $l$, and $k$, while \citet{jiang14pattern} gives a $2$-approximation algorithm that runs in time $O(nml^k)$.

\subsection{Boolean CP Clustering}
\label{sec:bool-cp-clust}

Constraining $k_1$, $k_2$, and $\tens{G}$ yields to somewhat different looking problems, though. In what can be seen as the other extreme, we can restrict $k_1 = k_2 = k_3 = k$ and $\tens{G}$ (which now is \byby{k}{k}{k}) to hyperdiagonal to obtain the \emph{Boolean CP clustering} (BCPC) problem:

\begin{problem}[BCPC]
  \label{prob:bcpc}
  Given a binary \byby{n}{m}{l} tensor \tens{X} and an integer $k$, find matrices $\matr{A}\in\{0,1\}^{n\times k}$, $\matr{B}\in\{0,1\}^{m\times k}$, and $\matr{C}\in\{0,1\}^{l\times k}$ such that $\matr{C}$ is a cluster assignment matrix  and that $[[\matr{A}, \matr{B}, \matr{C}]]_B$ is a good Boolean CP decomposition of $\tens{X}$.
\end{problem}

What BCPC does is perhaps easiest to understand if we use the unfolding rules~\eqref{eq:BCP:unfold} and write
\begin{equation}
  \label{eq:BCPC:unfold}
  \matr{X}_{(3)} \approx \matr{C}\bprod(\matr{B}\krprod\matr{A})^T\; ,
\end{equation}
where we can see that compared to the general BTC, we have restricted the type of centroids: each centroid must be a row of type $(\vec{b}\kprod\vec{a})^T$. This restriction plays a crucial role in the decomposition, as we shall see shortly. Notice also that using~\eqref{eq:clustprod} we can rewrite~\eqref{eq:BCPC:unfold} without the Boolean product:
\begin{equation}
  \label{eq:BCPC:unfold:alt}
  \matr{X}_{(3)} \approx \matr{C}(\matr{B}\krprod\matr{A})^T\; .
\end{equation}

\subsection{Similarity vs. Dissimilarity}
\label{sec:simil-vs.-diss}

So far we have avoided defining what we mean by ``good'' clustering. Arguably the most common approach is to say that any clustering is good if it minimizes the sum of distances (i.e.\ dissimilarities) between the original elements and their cluster's representative (or centroid). An alternative is to maximize the similarity between the elements and their representatives: if the data and the centroids are binary, this problem is known as \emph{Hypercube segmentation} \citep{kleinberg04segmentation}. While maximizing similarity is obviously a dual of minimizing the dissimilarity -- in the sense that an optimal solution to one is also an optimal solution to the other -- the two behave differently when we aim at approximating the optimal solution. For example, for a fixed $k$, the best known approximation algorithm to DBPP is the aforementioned 2-approximation algorithm \citep{jiang14pattern}, while \citet{alon99two} gave a PTAS for the Hypercube segmentation problem. 

The differences between the approximability, and in particular the reasons behind those differences, are important for data miners. Namely, it is not uncommon that our ability to minimize the dissimilarities is bounded by a family of problems where the optimal solution obtains extremely small dissimilarities, while the best polynomial-time algorithm has to do with slightly larger errors. These larger errors, however, might still be small enough relative to the data size that we can ignore them. This is what essentially happens when we maximize the similarities.

For example, consider an instance of DBPP where the optimal solution causes ten errors. The best-known algorithm can guarantee to not cause more than twenty errors; assuming the data size is, say, \by{1000}{500}, making mistakes on twenty elements can still be considered excellent. On the other hand, should the optimal solution make an error on every fourth element, the best approximation could only guarantee to make errors in at most every second element. When considering the similarities instead of dissimilarities, these two setups look very different: In the first, we get within $0.99998$ of the best solution, while in the other, we are only within $0.66667$ of the optimal. Conversely, if the algorithm wants to be within, say, $0.9$ of the optimal similarity in the latter setup, it must approximate the dissimilarity within a factor of $1.3$. Hence similarity is less sensitive to small errors (relative to the data size) and more sensitive to large errors, than dissimilarity is. In many applications, this is exactly what we want. 

None of this is to say that we should stop considering the error and concentrate only on the similarities. On the contrary: knowing the data size and the error, we can easily compute the similarity for the binary data, and especially when the error is relatively small, it is much easier to compare than the similarity scores. What we do argue, however, is that when analyzing the approximation ratio a data mining algorithm can obtain, similarity can give us a more interesting picture of the actual behavior of the algorithm. Hence, for the rest of the paper, we shall concentrate on the maximum-similarity variants of the problems; most notably, we concentrate on the maximum-similarity BCPC, denoted \BCPCm.

% We consider the variation of \emph{tensor clustering} where the idea is to cluster one mode of a tensor and potentially reduce the dimensionality of the other modes.

% Assuming a 3-way tensor and that we do the clustering in the last mode, we can express the \emph{Boolean CP clustering} (BCPC) problem as follows:

% \begin{definition}[BCPC]
%   \label{prob:bcpc}
%   \hspace{0.1em}\newline
%   Given a binary \byby{n}{m}{l} tensor \tens{X} and an integer $k$, find matrices $\matr{A}\in\{0,1\}^{n\times k}$, $\matr{B}\in\{0,1\}^{m\times k}$, and $\matr{C}\in\{0,1\}^{l\times k}$ such that $\matr{C}$ is a cluster assignment matrix  and that the tuple $(\matr{A}, \matr{B}, \matr{C})$ maximizes $\simi(\tens{X}, [[\matr{A},\matr{B},\matr{C}]]_B)$
% \end{definition}

% To understand what BCPC does, we use the unfolding rules~\eqref{eq:BCP:unfold} and write
% %\begin{equation}
%   %\label{eq:BCPC:unfold}
%   $\matr{X}_{(3)} \approx \matr{C}(\matr{B}\krprod\matr{A})^T\,$,
% %\end{equation}
% where we can see that we have restricted the type of cluster centroids: While in a general clustering problem, we would aim to cluster the frontal slices of $\tens{X}$ into $k$ clusters each represented by an \by{n}{m} matrix, in this setting each cluster representative has to be of type $(\vec{b}\kprod\vec{a})^T$. This restriction on the cluster centroids plays a crucial role in the decomposition, as we shall see shortly. 

%Notice that we can as well write $\matr{C}(\matr{B}\krprod\matr{A})^T)$ ...

%%% Local Variables: 
%%% mode: latex
%%% TeX-master: "main"
%%% End: 

\section{Solving the BCPC\(_{\text{\textbf{max}}}\)}
\label{sec:theory}

Given a tensor $\tens{X}$, for the optimal solution to \BCPCm, we need matrices $\matr{A}$, $\matr{B}$, and $\matr{C}$  that maximize $\simi(\matr{X}_{(3)}, \matr{C}(\matr{B}\krprod\matr{A})^T)$. If we replace $\matr{B}\krprod\matr{A}$ with an arbitrary binary matrix, this would be equal to the Hypercube segmentation problem defined by \citet{kleinberg04segmentation}: Given a set $S$ of $l$ vertices of the $d$-dimensional cube $\{0,1\}^d$, find $k$ vertices $P_1,\ldots,P_k\in \{0,1\}^d$ and a partition of $S$ into $k$ segments to maximize $\sum_{i=1}^k\sum_{c\in S} \simi(P_i, c)$. Hence our algorithms resembles those for Hypercube segmentation, with the added restrictions to the centroid vectors.

\subsection{The Algorithm}
\label{sec:algorithm}
\citet{alon99two} gave an algorithm for the Hypercube segmentation problem that obtains a similarity within $(1-\varepsilon)$ of the optimum. The running time of the algorithm is $\e^{O((k^2/\varepsilon^2)\ln k)}nml$ for \byby{n}{m}{l} data. While technically linear in data size, the first term turns the running time unfeasible even for moderate values of $k$ (the number of clusters) and $\varepsilon$. We therefore base our algorithm on the simpler algorithm by \citet{kleinberg04segmentation} that is based on random sampling. This algorithm obtains an approximation ratio of $0.828-\varepsilon$ with constant probability and running time $O(nmlk(9/\varepsilon)^k\ln(1/\varepsilon))$. While the running time is still exponential in $k$, it is dominated by the number of samples we do: each sample takes time $O(nmlk)$ for $k$ clusters and \byby{n}{m}{l} data. For practical purposes, we can keep the number of samples constant (with the cost of losing approximation guarantees, though).

Our algorithm \Saboteur\ (\textsc{Sa}mpling for \textsc{Bo}olean \textsc{Te}nsor cl\textsc{u}ste\textsc{r}ing), Algorithm~\ref{alg:saboteur}, considers only the unfolded tensor $\matr{X}_{(3)}$. In each iteration, it samples $k$ rows of $\matr{X}_{(3)}$ as the initial, unrestricted centroids. It then turns these unrestricted centroids into the restricted type in line~\ref{alg:saboteur:rank1}, and then assigns each row of $\matr{X}_{(3)}$ to its closest restricted centroid. The sampling is repeated multiple times, and in the end, the factors that gave the highest similarity are returned. 

\begin{algorithm}[tb]
\caption{\Saboteur\ algorithm for the \BCPC}
\label{alg:saboteur}
\small
\begin{algorithmic}[1]
\Input 3-way binary tensor $\tens{X}$, number of clusters $k$, number of samples $r$.
\Output Binary factor matrices $\matr{A}$ and $\matr{B}$, cluster assignment matrix $\matr{C}$.
\Function{\Saboteur}{\tens{X}, k, r}
\Repeat
  \State Sample $k$ rows of $\matr{X}_{(3)}$ into matrix $\matr{Y}$ \label{alg:saboteur:sample}
  \State Find binary matrices $\matr{A}$ and $\matr{B}$ that maximize  $\simi(\matr{Y}, (\matr{B}\krprod\matr{A})^T)$
  \label{alg:saboteur:rank1}
  \State Cluster $\matr{C}$ by assigning each row of $\matr{X}_{(3)}$ to its closest row of  $(\matr{B}\krprod\matr{A})^T$
  \label{alg:saboteur:clusters}
\Until{$r$ resamples are done}
\State \textbf{return} best $\matr{A}$, $\matr{B}$, and $\matr{C}$
\EndFunction
\end{algorithmic}
\end{algorithm}

The algorithm is extremely simple and fast. 
%that, as we shall see in Section~\ref{sec:experiments}, also performs very well. 
In line~\ref{alg:saboteur:sample} the algorithm samples $k$ rows of the data as its initial centroids. \citet{kleinberg04segmentation} proved that among the rows of $\matr{X}_{(3)}$ that are in the same optimal cluster, one is a good approximation of the (unrestricted) centroid of the cluster: 

\begin{lemma}[{\citealt[Lemma 3.1]{kleinberg04segmentation}}]
  \label{lemma:kleinberg}
  Let $\matr{X}$ be an \by{n}{m} binary matrix and let 
  \[
  \vec{y}^* = \argmax_{\vec{y}\in\{0,1\}^m}\sum_{i=1}^n \simi(\vec{x}_i, \vec{y})\; .
  \]
  Then there exist a row $\vec{x}_j$ of $\matr{X}$ such that
  \begin{equation}
    \label{eq:kleinberg}
    \sum_{i=1}^n\simi(\vec{x}_i, \vec{x}_j) \geq (2\sqrt{2} -2)\sum_{i=1}^n\simi(\vec{x}_i, \vec{y}^*)\; .
  \end{equation}
\end{lemma}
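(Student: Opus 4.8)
The plan is to establish the bound by an averaging argument over the rows of $\matr{X}$, reducing the matrix claim to a per-coordinate estimate. Fix the optimal unrestricted centroid $\vec{y}^*$. Since similarity decomposes coordinate-wise, $\sum_{i=1}^n \simi(\vec{x}_i,\vec{y}^*) = \sum_{t=1}^m \big( \,\#\{i : x_{it}=y^*_t\}\,\big)$, and because $y^*_t$ is chosen optimally it equals the majority value in column $t$; write $p_t$ for the fraction of rows with $x_{it}=1$ and note the column contributes $n\max\{p_t,1-p_t\}$. The idea is then to lower-bound $\frac{1}{n}\sum_{j=1}^n \sum_{i=1}^n \simi(\vec{x}_i,\vec{x}_j)$ — the \emph{average} over candidate rows $\vec{x}_j$ — and show it is at least $(2\sqrt2-2)$ times $\sum_{i=1}^n\simi(\vec{x}_i,\vec{y}^*)$; the existence of a single good row follows since the maximum is at least the average.

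The key step is the per-column computation. For a fixed column $t$ with a $p_t$ fraction of ones, $\frac{1}{n}\sum_j \#\{i : x_{it}=x_{jt}\} = n\big(p_t^2 + (1-p_t)^2\big)$, whereas the optimal centroid gets $n\max\{p_t,1-p_t\}$. So it suffices to prove the scalar inequality $p^2+(1-p)^2 \ge (2\sqrt2-2)\max\{p,1-p\}$ for all $p\in[0,1]$. By symmetry assume $p\le 1/2$, so we need $f(p) := p^2+(1-p)^2 - (2\sqrt2-2)(1-p) \ge 0$ on $[0,1/2]$. One minimizes $f$: $f'(p) = 4p - 2 + (2\sqrt2-2) = 4p - (4-2\sqrt2)$, vanishing at $p = 1 - \tfrac{\sqrt2}{2} = \tfrac{2-\sqrt2}{2}\approx 0.293$, which lies in $[0,1/2]$. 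Plugging in, the minimum value is exactly $0$ after simplification (this is precisely why the constant $2\sqrt2-2$ appears — it is the largest constant making the inequality tight). Summing the column-wise inequalities over $t=1,\dots,m$ gives $\frac1n\sum_j\sum_i\simi(\vec{x}_i,\vec{x}_j) \ge (2\sqrt2-2)\sum_i\simi(\vec{x}_i,\vec{y}^*)$, and picking the best $j$ finishes the argument.

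The main obstacle — really the only non-routine point — is verifying that the scalar bound $p^2+(1-p)^2 \ge (2\sqrt2-2)\max\{p,1-p\}$ holds with the stated constant and is tight; everything else is bookkeeping with the coordinate-wise decomposition of $\simi$ and a max-versus-average step. I would double-check the tightness computation carefully, since the whole approximation ratio $2\sqrt2-2$ (and hence the downstream $2(\sqrt2-1)\approx 0.828$ guarantee) hinges on the exact value of $f$ at its critical point $p=1-\sqrt2/2$.
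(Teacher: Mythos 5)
Your proof is correct: the per-column reduction, the average-over-rows computation giving $p_t^2+(1-p_t)^2$ versus $\max\{p_t,1-p_t\}$ for the optimum, the scalar bound $p^2+(1-p)^2\ge(2\sqrt{2}-2)\max\{p,1-p\}$ (tight at $p=1/\sqrt{2}$), and the max-at-least-average step fit together without gaps. The paper itself gives no proof --- it imports this as Lemma 3.1 of \citet{kleinberg04segmentation} --- and your argument is essentially the standard one from that source, so there is nothing further to reconcile.
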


That is, if we sample one row from each cluster, we have a high probability of inducing a close-optimal clustering.

\subsection{Binary Rank-1 Matrix Decompositions}
\label{sec:binary-rank-1}

The next part of the \Saboteur\ algorithm is to turn the unrestricted centroids into the restricted form (line~\ref{alg:saboteur:rank1}). We start by showing that this problem is equivalent to finding the maximum-similarity binary rank-1 decomposition of a binary matrix:

\begin{problem}[Binary rank-1 decomposition]
  \label{prob:br1}
  Given an \by{n}{m} binary matrix $\matr{X}$, find an $n$\-/dimensional binary vector $\vec{a}$ and an $m$\-/dimensional binary vector $\vec{b}$ that maximize $\simi(\matr{X}, \vec{a}\outprod\vec{b})$.
\end{problem}

\begin{lemma}
  \label{lemma:centroid_r1}
  Given an \by{k}{nm} binary matrix $\matr{X}$, finding \by{n}{k} and \by{m}{k} binary matrices $\matr{A}$, $\matr{B}$ that maximize $\simi(\matr{X}, (\matr{B}\krprod\matr{A})^T)$ is equivalent to finding the most similar binary rank-1 approximation of each row $\vec{x}$ of $\matr{X}$, where the rows are re-shaped as \by{n}{m} binary matrices.
\end{lemma}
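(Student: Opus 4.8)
The plan is to exploit the fact that both the objective and the Khatri--Rao structure of the candidate matrix split cleanly across the $k$ rows, reducing the problem to $k$ independent rank-1 subproblems.

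\textbf{Step 1: additivity of the objective over rows.} First I would record that $\simi$ is additive over rows: for two $k\times nm$ binary matrices, the number of disagreeing entries is the sum of the per-row disagreements, so
$\simi(\matr{X}, (\matr{B}\krprod\matr{A})^T) = \sum_{i=1}^k \simi(\vec{x}_i, (\vec{b}_i\kprod\vec{a}_i)^T)$,
where $\vec{x}_i$ is the $i$th row of $\matr{X}$ and, by the definition of the Khatri--Rao product~\eqref{eq:khatri-rao}, $\vec{b}_i\kprod\vec{a}_i$ is the $i$th column of $\matr{B}\krprod\matr{A}$, so that $(\vec{b}_i\kprod\vec{a}_i)^T$ is the $i$th row of $(\matr{B}\krprod\matr{A})^T$. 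Crucially, the variables $(\vec{a}_i,\vec{b}_i)$ for distinct $i$ are unconstrained and disjoint, so the maximization over $(\matr{A},\matr{B})$ decouples into $k$ independent maximizations, one per summand.

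\textbf{Step 2: a single summand is a binary rank-1 problem.} Reshaping the length-$nm$ vector $\vec{x}_i$ column by column into an $n\times m$ binary matrix $\matr{X}^{(i)}$ is a bijection on $\{0,1\}^{nm}$ that preserves the coordinate-by-coordinate pairing, hence preserves $\simi$. Under this same reshaping, the identity $\vec{b}_i\kprod\vec{a}_i = \mathrm{vec}(\vec{a}_i\outprod\vec{b}_i)$ (column-major vectorization; checked directly from the block form of the Kronecker product, since both vectors carry the value $a_p b_q$ in position $(q-1)n+p$) shows that $(\vec{b}_i\kprod\vec{a}_i)^T$ reshapes to the outer product $\vec{a}_i\outprod\vec{b}_i$. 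Therefore $\simi(\vec{x}_i,(\vec{b}_i\kprod\vec{a}_i)^T) = \simi(\matr{X}^{(i)}, \vec{a}_i\outprod\vec{b}_i)$, and maximizing the $i$th summand over $\vec{a}_i\in\{0,1\}^n$, $\vec{b}_i\in\{0,1\}^m$ is exactly Problem~\ref{prob:br1} instantiated on $\matr{X}^{(i)}$.

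\textbf{Step 3: assemble.} Combining Steps 1 and 2 gives the equivalence in both directions: an optimal $(\matr{A},\matr{B})$ is obtained by solving the $n\times m$ binary rank-1 decomposition for each reshaped row $\matr{X}^{(i)}$ and placing the resulting $\vec{a}_i$, $\vec{b}_i$ as the $i$th columns of $\matr{A}$, $\matr{B}$; conversely, any optimizer restricts, row by row, to an optimal rank-1 decomposition of each $\matr{X}^{(i)}$, and the attained similarities coincide term by term. The only genuine bookkeeping point — and the step most likely to need care — is fixing one reshaping convention and verifying that it simultaneously turns $\vec{x}_i$ into $\matr{X}^{(i)}$ and $\vec{b}_i\kprod\vec{a}_i$ into $\vec{a}_i\vec{b}_i^T$; everything else follows immediately from the additivity of $\simi$ and the separability of the variables.
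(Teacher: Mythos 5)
Your proof is correct and follows essentially the same route as the paper's: additivity of $\simi$ over rows, identifying the $i$th row of $(\matr{B}\krprod\matr{A})^T$ with $(\vec{b}_i\kprod\vec{a}_i)^T$, and the column-major reshaping that turns $\vec{b}_i\kprod\vec{a}_i$ into $\vec{a}_i\outprod\vec{b}_i$. You merely make explicit two points the paper leaves implicit (the decoupling of the variables across rows and the index bookkeeping for the vectorization identity), which is fine.
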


\begin{proof}
  %The proof is a relatively straight-forward re-arrangement of the elements. 
If $\vec{x}_i$ is the $i$th row of $\matr{X}$ and $\vec{z}_i$ is the corresponding row of $(\matr{B}\krprod\matr{A})^T$, then $\simi(\matr{X}, (\matr{B}\krprod\matr{A})^T) = \sum_{i=1}^k\simi(\vec{x}_i, \vec{z}_i)$, and hence we can solve the problem row-by-row. Let $\vec{x} = (x_{1,1}, x_{2,1}, \ldots, x_{n,1}, x_{1,2}, \ldots, x_{n,m})$ be a row of $\matr{X}$. Re-write $\vec{x}$ as an \by{n}{m} matrix $\matr{Y}$, %in column major order, 
\[
\matr{Y} = 
\begin{pmatrix}
  x_{1,1} & x_{1,2} & \cdots & x_{1,m} \\
  x_{2,1} & x_{2,2} & \cdots & x_{2,m} \\
  \vdots & \vdots & \ddots & \vdots \\
  x_{n,1} & x_{n,2} & \cdots & x_{n,m}
\end{pmatrix}\; .
\]

Consider the row of $(\matr{B}\krprod\matr{A})^T$ that corresponds to $\vec{x}$, and notice that it can be written as $(\vec{b}\kprod\vec{a})^T$, where $\vec{a}$ and $\vec{b}$ are the columns of $\matr{A}$ and $\matr{B}$ that correspond to $\vec{x}$. As $(\vec{b}\kprod\vec{a})^T = (b_1\vec{a}^T, b_2\vec{a}^T, \cdots, b_m\vec{a}^T)$, re-writing it similarly as $\vec{x}$ we obtain
\[
%\begin{pmatrix}
%  b_1\vec{a} & b_2\vec{a} & \cdots & b_m\vec{a}
%\end{pmatrix}
%= 
\begin{pmatrix}
  a_1b_1 & a_1b_2 & \cdots & a_1b_m \\
  a_2b_1 & a_2b_2 & \cdots & a_2b_m \\
  \vdots & \vdots & \ddots & \vdots \\
  a_nb_1 & a_nb_2 & \cdots & a_nb_m
\end{pmatrix}
=\vec{a}\vec{b}^T = \vec{a}\outprod \vec{b}\; .
\]
Therefore, $\simi(\vec{x}, (\vec{b}\kprod\vec{a})^T) = \simi(\matr{Y}, \vec{a}\outprod\vec{b})$.
\end{proof}

%Binary rank-1 approximations are studied earlier. For example, Shen et al.~\cite{shen09mining} give a binary integer linear program (ILP) to find the optimum rank-1 approximation together with an LP-relaxation and minimum cut formulation that achieve factor-2 approximation guarantees for the minimum error problem. Another 2-approximation algorithm for minimum error problem is given by Jiang~\cite{jiang14pattern}. Here, however, we will consider the maximum similarity versions. 

We start by showing that the maximum-similarity binary rank-1 decomposition admits a PTAS. To that end, we present a family of randomized algorithms that on expectation attain a similarity within $(1-\varepsilon)$ of the optimum. The family is similar to that of \citeauthor{alon99two}'s~(\citeyear{alon99two}) and can be analysed and de-randomized following the techniques presented by them. The family of algorithms (one for each $\varepsilon$) is presented as Algorithm~\ref{alg:rank1_PTAS}.

\begin{algorithm}[tb]
\caption{Randomized PTAS for maximum-similarity binary rank-1 decompositions}
\label{alg:rank1_PTAS}
\small
\begin{algorithmic}[1]
\Input An \by{n}{m} binary matrix $\matr{X}$.
\Output Binary vectors $\vec{a}$ and $\vec{b}$.
\Function{PTAS$_\varepsilon$}{\matr{X}}
\State Sample $l=\Theta(\varepsilon^{-2})$ rows of $\matr{X}$ u.a.r. with replacements
\ForAll{partitions of the sample into two sets}
  \State Let $\matr{X}'$ contain the rows in the sample
  \For{both sets in the partition}
    \State Let $\vec{a}$ be the incidence vector of the set
    \State Find $\vec{b}^T$ maximizing $\simi(\matr{X}', \vec{a}\vec{b}^T)$ \label{alg:rank1_PTAS:maxb}
    \State Extend $\vec{a}$ to the rows not in the sample maximizing $\simi(\matr{X}, \vec{a}\vec{b}^T)$ \label{alg:rank1_PTAS:maxa}
  \EndFor
\EndFor
\State \textbf{return} best vectors $\vec{a}$ and $\vec{b}$
\EndFunction
\end{algorithmic}
\end{algorithm}

Lines~\ref{alg:rank1_PTAS:maxb} and~\ref{alg:rank1_PTAS:maxa} require us to solve one of the factor vectors when the other is given. To build $\vec{b}$ (line~\ref{alg:rank1_PTAS:maxb}), we simply take the column-wise majority element of those rows where $\vec{a}$ is $1$, and we extend $\vec{a}$ similarly in line~\ref{alg:rank1_PTAS:maxa}.

The analysis of Algorithm~\ref{alg:rank1_PTAS} follows closely the proofs by \citet{alon99two} and is omitted.

The running time of the PTAS algorithm becomes prohibitive even with moderate values of $\varepsilon$, and therefore we will not use it in \Saboteur. Instead, we present a simple, deterministic algorithm that approximates the maximum similarity within $0.828$, Algorithm~\ref{alg:rank1_approx}. It is similar to the algorithm for Hypercube segmentation based on random sampling presented by \citet{kleinberg04segmentation}. The algorithm considers every row of $\matr{X}$ as a potential vector $\vec{b}$ and finds the best $\vec{a}$ given $\vec{b}$.  Using Lemma~\ref{lemma:kleinberg} it is straight forward to show that the algorithm achieves the claimed approximation ratio:

\begin{algorithm}[tb]
\caption{Approximation of maximum-similarity binary rank-1 decompositions}
\label{alg:rank1_approx}
\small
\begin{algorithmic}[1]
\Input An \by{n}{m} binary matrix $\matr{X}$.
\Output Binary vectors $\vec{a}$ and $\vec{b}$.
\Function{$A$}{\matr{X}}
\ForAll{rows $\vec{x}_i$ of $\matr{X}$}
  \State Let $\vec{b} = \vec{x}_i$
  \State Find $\vec{a}$ maximizing $\simi(\matr{X}, \vec{a}\vec{b}^T)$
\EndFor
\State \textbf{return} best vectors $\vec{a}$ and $\vec{b}$
\EndFunction
\end{algorithmic}
\end{algorithm}

\begin{lemma}
  \label{lemma:rank1_approx}
  Algorithm~\ref{alg:rank1_approx} approximates the optimum similarity within $0.828$ in time $O(nm\min\{n,m\})$.
\end{lemma}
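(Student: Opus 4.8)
The plan is to reduce the analysis to Lemma~\ref{lemma:kleinberg}. Fix an optimal pair $(\vec{a}^*, \vec{b}^*)$ for Problem~\ref{prob:br1} and let $\mathrm{OPT} = \simi(\matr{X}, \vec{a}^*\outprod\vec{b}^*)$. Write $R = \{i : a^*_i = 1\}$ for the set of rows ``covered'' by $\vec{a}^*$; we may assume $R \neq \emptyset$, since otherwise $\mathrm{OPT} = \sum_i \simi(\vec{x}_i, \vec{0})$ and any choice of $\vec{b}$ together with the best matching $\vec{a}$ already attains this. The first observation is that, with $\vec{a}^*$ fixed, $\vec{b}^*$ must maximize $\sum_{i\in R}\simi(\vec{x}_i,\vec{b})$ over $\vec{b}\in\{0,1\}^m$ (the rows outside $R$ contribute $\simi(\vec{x}_i,\vec{0})$ regardless of $\vec{b}$), i.e. $\vec{b}^*$ is an optimal centroid for the submatrix $\matr{X}_R$ formed by the rows in $R$.

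Next I would apply Lemma~\ref{lemma:kleinberg} to $\matr{X}_R$: there is an index $j \in R$ with $\sum_{i\in R}\simi(\vec{x}_i,\vec{x}_j) \geq (2\sqrt{2}-2)\sum_{i\in R}\simi(\vec{x}_i,\vec{b}^*)$. The algorithm considers $\vec{b} = \vec{x}_j$ in one of its iterations, and for this $\vec{b}$ it computes the $\vec{a}$ maximizing $\simi(\matr{X},\vec{a}\vec{b}^T)$; since that objective splits coordinate-wise over rows (set $a_i = 1$ iff $\simi(\vec{x}_i,\vec{b}) \geq \simi(\vec{x}_i,\vec{0})$), the value attained is at least the one obtained by instead plugging in $\vec{a} = \vec{a}^*$, namely $\sum_{i\in R}\simi(\vec{x}_i,\vec{x}_j) + \sum_{i\notin R}\simi(\vec{x}_i,\vec{0})$. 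Chaining the two inequalities and using $0 < 2\sqrt{2}-2 < 1$ to also scale down the terms for $i \notin R$, this is at least $(2\sqrt{2}-2)\bigl(\sum_{i\in R}\simi(\vec{x}_i,\vec{b}^*) + \sum_{i\notin R}\simi(\vec{x}_i,\vec{0})\bigr) = (2\sqrt{2}-2)\,\mathrm{OPT}$. Since the algorithm returns the best pair it ever encounters, its output has similarity at least $(2\sqrt{2}-2)\,\mathrm{OPT} = 2(\sqrt{2}-1)\,\mathrm{OPT} \approx 0.828\,\mathrm{OPT}$.

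For the running time, one iteration fixes $\vec{b}$ and finds the optimal $\vec{a}$ by comparing, for each of the $n$ rows, $\simi(\vec{x}_i,\vec{b})$ with $\simi(\vec{x}_i,\vec{0})$, at cost $O(m)$ per row and $O(nm)$ in total (after which evaluating $\simi(\matr{X},\vec{a}\vec{b}^T)$ is free). Over $n$ candidate rows this is $O(n^2 m)$. Because the problem is symmetric under transposition, $\simi(\matr{X},\vec{a}\outprod\vec{b}) = \simi(\matr{X}^T,\vec{b}\outprod\vec{a})$, one runs the algorithm on $\matr{X}$ when $n \leq m$ and on $\matr{X}^T$ otherwise, yielding $O(nm\min\{n,m\})$.

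I expect no serious obstacle; the only points needing care are (i) verifying that $\vec{b}^*$ is an optimal centroid of the sub-instance $\matr{X}_R$, so that Lemma~\ref{lemma:kleinberg} applies, and (ii) ensuring the approximation factor, being strictly below $1$, can be pulled out in front of the \emph{full} $\mathrm{OPT}$ rather than only the ``covered'' part --- otherwise the uncovered rows would have to be accounted for separately and the clean bound would not follow.
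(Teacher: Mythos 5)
Your proof is correct and follows essentially the same route as the paper's: fix the optimal $(\vec{a}^*,\vec{b}^*)$, apply Lemma~\ref{lemma:kleinberg} to the rows covered by $\vec{a}^*$ to get a data row $\vec{x}_j$ achieving a $(2\sqrt{2}-2)$ fraction there, absorb the uncovered rows using the fact that the factor is below $1$, and note the algorithm tries every row and re-optimizes $\vec{a}$, with the same transpose trick for the running time. Your write-up merely makes explicit the steps the paper compresses into its parenthetical remark about rows where $\vec{a}^*$ is zero.
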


\begin{proof}
  To prove the approximation ratio, let $\vec{a}^*(\vec{b}^*)^T$ be the optimum decomposition. Consider the rows in which $\vec{a}^*$ has $1$. Per Lemma~\ref{lemma:kleinberg}, selecting one of these rows, call it $\vec{b}$, gives us $\simi(\matr{X}, \vec{a}^*\vec{b}^T)\geq (2\sqrt{2} -2)\simi(\matr{X}, \vec{a}^*\vec{b}^T)$ (notice that $\vec{a}^*\vec{b}^T$ agrees with the optimal solution in rows where $\vec{a}^*$ is zero). Selecting $\matr{a}$ that maximizes the similarity given $\matr{b}$ can only improve the result, and the claim follows as we try every row of $\matr{X}$. 

If $n< m$, the time complexity follows as for every candidate $\vec{b}$ we have to make one sweep over the matrix. If $m < n$, we can operate on the transpose.  
\end{proof}

\subsection{Iterative Updates}
\label{sec:iterative-updates}
The \Saboteur algorithm bears resemblance to the initialization phase of $k$-means style clustering algorithms. We propose a variation of \Saboteur, \SaboteurIU, where $k$-means style iterative updates are performed on the result of \Saboteur until it does not improve further. In each iteration of the iterative updates phase, the new centroids are first determined by placing $1$s in those positions where the majority of cluster members have a $1$. Next, these new centroids are constrained to rank-1, and finally, the cluster assignment is recomputed. 

 This procedure clearly slows down the algorithm. Also, due to the constraint on the centroids the iterative updates might actually impair the result, unlike in the normal $k$-means algorithm that converges to a local optimum. However, as we shall see in Section~\ref{sec:experiments}, the results in practice improve slightly. 
 
\subsection{Implementation Details}
\label{sec:algo:implementation}
We implemented the \Saboteur algorithm in C.\!\footnote{The code is available from \url{http://www.mpi-inf.mpg.de/~pmiettin/btc/}} 
In the implementation, the tensor $\tens{X}$ is represented as a bitmap. This allows for using fast vectorized instructions such as \texttt{xor} and \texttt{popcnt}. In addition this representation takes exactly one bit per entry (excluding necessary padding), being very memory efficient even compared to sparse tensor representations.

The second ingredient to the fast algorithm we present is parallelization. As every row of $\matr{X}_{(3)}$ is handled independently, the algorithm is embarrassingly parallel. We use the OpenMP \citep{dagum1998openmp} and parallelize along the sampled cluster centroids. This means that the rank-1 decompositions of each centroid  as well as the computation of the similarity in each of the columns of the Khatri-Rao product (line~\ref{alg:saboteur:rank1} of Algorithm~\ref{alg:saboteur}) are computed in parallel.

% \subsection{Discussion}
% \label{sec:other-variants}

% Lemma~\ref{lemma:centroid_r1} gives us yet another way of interpreting \BCPC, namely, in \BCPC\ each centroid must be a binary rank-1 matrix. One could define a more general variant where the centroids are arbitrary-rank binary matrices. Between these two extrema is a problem where the (Boolean) ranks of the centroids are bounded from above by some constant $r< \min\{n,m\}$. For such a problem, however, finding the centroids is even harder than it is now, as it essentially requires us to solve the Boolean matrix factorization problem which is a hard problem even to approximate~\cite{miettinen08discrete}. 

\subsection{Selecting the Number of Clusters}
\label{sec:mdl}

A common problem in data analysis is that many methods require the selection of the \emph{model order} a priori; for example, most low-rank matrix factorization methods require the user to provide the target rank before the algorithm starts. Many clustering methods -- ours included -- assume the number of clusters as a parameter, although there are other methods that do not have this requirement. 

When the user does not know what would be a proper number of clusters for the data, we propose the use of the \emph{minimum description length principle} \citep[MDL,][]{rissanen78modeling} for automatically inferring it. In particular, we adopt the recent work of \citet{miettinen14mdl4bmf} on using MDL for Boolean matrix factorization (BMF) to our setting (we refer the reader to~\citet{miettinen14mdl4bmf} for more information). 

The intuition behind the MDL principle is that the best model for the data (best number of clusters, in our case) is the one that lets us compress the data most. We use the two-part (i.e.\  crude) MDL where the \emph{description length} contains two parts: the description length of the model $M$, $L(M)$, and that of the data $D$ given the model $M$, $L(D\mid M)$. The overall description length is simply $L(M) + L(D\mid M)$. %Intuitively, when model complexity $L(M)$ increases (e.g.\ as more clusters are added), the model fits better to the data and  $L(D\mid M)$ decreases; however, at some point, the model starts over-fitting, and the increase in $L(M)$ overtakes the decrease in $L(D\mid M)$. 
The optimal model (in MDL's sense) is the one where $L(M) + L(D\mid M)$ is minimized. 

In \BCPC, the model consists of the two factor matrices $\matr{A}$ and $\matr{B}$ and the cluster assignment matrix $\matr{C}$. Given these, we can reconstruct the original tensor $\tens{X}$ if we know the locations where $[[\matr{A}, \matr{B}, \matr{C}]]_B$ differs from $\tens{X}$. Therefore, $L(M)$ is the total length of encoding $\matr{A}$, $\matr{B}$, and $\matr{C}$, while $L(D\mid M)$ is the length of encoding the locations of the differences, i.e.\ tensor $\tens{X}\oplus [[\matr{A}, \matr{B}, \matr{C}]]_B$, where $\oplus$ is the element-wise \emph{exclusive-or}. 

Encoding $\matr{A}$ and $\matr{B}$ is similar to BMF, so we can use the DtM encoding of \citet{miettinen14mdl4bmf} for encoding them (and the related dimensions of the data). 
To encode the matrix $\matr{C}$, we only need to store to which cluster each frontal slice is associated to, taking $l\cdot\log_2(k)$ bits. This finalises the computation of $L(M)$.

To compute $L(D\mid M)$, we note that we can re-write 
$
\tens{X}\oplus [[\matr{A}, \matr{B}, \matr{C}]]_B = \matr{X}_{(3)}\oplus \matr{C}\bprod (\matr{B}\bprod\matr{A})^T .
$
The computation of $L(D\mid M)$ now becomes equivalent of computing it for Boolean matrix factorization with factor matrices $\matr{C}$ and $\matr{D}= (\matr{B}\bprod\matr{A})^T$. Hence, we can follow the Typed XOR DtM approach of \citet{miettinen14mdl4bmf} directly.

To sum up, in order to use MDL to select the number of clusters, we have to run \Saboteur with different numbers of clusters, compute the description length for each result, and take the one that yields the smallest description length. 

\subsection{Maximum-Similarity BTC}
\label{sec:algo:btc}

We note here that as the Hypercube segmentation problem is the maximization version of the DBPP (Problem~\ref{prob:dbpp}), the algorithms of \citet{kleinberg04segmentation} and \citet{alon99two} can be used as such for solving the maximum-similarity Unconstrained BTC. 

%%% Local Variables: 
%%% mode: latex
%%% TeX-master: "main"
%%% End: 

\section{Experimental Evaluation}
\label{sec:experiments}

\subsection{Other Methods and Evaluation Criteria}
\label{sec:other-methods}

To the best of our knowledge, this paper presents the first algorithms for Boolean tensor clustering and hence we cannot compare directly to other methods. We decided to compare \Saboteur\ to continuous and Boolean tensor CP decompositions. We did not use other tensor clustering methods as they aim at optimizing significantly different targets (see Section~\ref{sec:relatedWork} for more elaborate discussion). 

We used the \BCPALS~\citep{miettinen11boolean} and \walknmerge~\citep{erdos13walknmerge} algorithms for computing Boolean CP decompositions. \BCPALS\ is based on iteratively updating the factor matrices one at a time (similarly to the classical alternating least squares optimizations), while \walknmerge\ is a recent algorithm for scalable Boolean tensor factorization in sparse binary tensors. We did not use \walknmerge\ on synthetic data as \BCPALS\ is expected to perform better on smaller and denser tensors~\citep{erdos13walknmerge}, but we used it on some larger real-world tensors; \BCPALS, on the other hand, does not scale well to larger tensors and hence we had to omit it from most real-world experiments. 

Of the continuous methods we used \ParCube \citep{papalexakis12parcube}\footnote{\url{http://www.cs.cmu.edu/~epapalex/}} and \CPAPR \citep{chi12tensors} (implementation from the Matlab Tensor Toolbox v2.5\footnote{\url{http://www.sandia.gov/~tgkolda/TensorToolbox/}}). \CPAPR\ is an alternating Poisson regression algorithm that is specifically developed for sparse (counting) data (which can be expected to follow the Poisson distribution), with the goal of returning sparse factors. 

%The aim for sparsity and, to some extend, considering the data as a counting data, make this method suitable for comparison; on the other hand, it aims to minimize the (generalized) K--L divergence, not squared error, and binary data is not Poisson distributed. %\!\footnote{Sampling Poisson distribution can give a binary matrix, but it cannot be forced to give one.}

The other method, \ParCube, uses sampling to find smaller sub-tensors. It then solves the CP decomposition in this sub-tensor, and merges the solutions back into one. We used a non-negative variant of \ParCube\ that expects non-negative data, and returns non-negative factor matrices. 
%\ParCube\ aims to minimize the squared error. 

For synthetic data, we report the relative similarity, that is, the fraction of the elements where the data and the clustering agree. For real-world data, we report the error measured using squared Frobenius norm (i.e.\ the number of disagreements between the data and the clustering when both are binary). Comparing binary methods against continuous ones causes issues, however. Using the squared Frobenius can help the real-valued methods, as it scales all errors less than $1$ down, but at the same time, small errors cumulate unlike with fully binary data. To alleviate this problem, we also rounded the reconstructed tensors from \CPAPR\ and \ParCube\ to binary tensors. We tried different rounding thresholds between $0$ and $1$ and selected the one that gave the lowest (Boolean) reconstruction error. The rounded versions are denoted by \CPAPRr and \ParCuber.

It is worth emphasizing that all of the methods we are comparing against are solving a relaxed version of the BCPC problem. Compared to BCPC, the Boolean CP factorization is not restricted to clustering the third mode while the normal CP factorization lifts even the requirement for binary factor matrices. Hence, a valid solution to BCPC is always also a valid solution to (Boolean and normal) CP factorization (notice, however, that the solutions for Boolean and normal CP factorization are not interchangeable). The methods we compare against should therefore always perform better than (or at least as good as) \Saboteur.

\subsection{Synthetic Experiments}
\label{sec:synth-exper}

To test the \Saboteur\ algorithm in a controlled environment, we created synthetic data sets that measured the algorithm's response to (1) different numbers of clusters, (2) different density of data, (3) different levels of additive noise, and (4) different levels of destructive noise. All tensors were of size \byby{700}{500}{50}. All data sets were created by first creating ground-truth binary factor matrices $\matr{A}$, $\matr{B}$, and $\matr{C}$. The default number of clusters was $7$ and the default density of the tensor $\tens{X} = [[\matr{A}, \matr{B}, \matr{C}]]_B$ was $0.05$. 

Additive and destructive noise were applied to the tensor $\tens{X}$. Additive noise turns zeros into ones while destructive noise turns ones into zeros. The default noise level\footnote{The noise levels are reported w.r.t. number of non-zeros.} for both types was $10\%$, yielding to a noised input tensor $\widetilde{\tens{X}}$. 

We varied each of the four features one at a time keeping the others in their default values, and created $5$ random copies on each parameter combination. The results we report are mean values over these five random copies. In all experiments, the number of clusters (or factors) was set to the true number of clusters used to create the data. The number of re-samples in \Saboteur\ was set to $r=20$ in all experiments. 

For similarity experiments, we compared the obtained reconstructions $[[\tilde{\matr{A}}, \tilde{\matr{B}}, \tilde{\matr{C}}]]_B$ to both the original tensor $\tens{X}$ and the noised tensor $\widetilde{\tens{X}}$.

\subsubsection{Varying the Noise}
\label{sec:varying-noise}
In the first experiment, we studied the effects different noise levels have to the reconstruction accuracy. First, we varied the level of additive noise from $5\%$ to $50\%$ (in steps of $5\%$). The results are in Figure~\ref{fig:synth:additive}, where we can see that \Saboteur (with and without iterative updates) and \CPAPRr all achieve very high similarity with a gentle downwards slope as the level of additive noise increases. \BCPALS\ is slightly worse, but consistent, while the performance of \ParCuber suffers significantly from increased noise levels. In order to test if the good performance of \Saboteur means it is just modeling the noise, we also compared the similarity of the reconstruction to the original, noise-free tensor $\tens{X}$ (Figure~\ref{fig:synth:additive:orig}). This did not change the results in meaningful ways, except that \BCPALS\ improved somewhat.

When moving from additive to destructive noise (Figure~\ref{fig:synth:destructive}), \Saboteur, \SaboteurIU, and \CPAPRr still stay as the best three methods, but as the destructive noise level increases, \BCPALS approaches the three. That behaviour, however, is mostly driven by `modeling the noise', as can be seen in Figure~\ref{fig:synth:destructive:orig}, which shows the similarity to the noise-free data. There, with highest levels of destructive noise, \Saboteur's performance suffers more than \CPAPRr's.

\begin{figure}
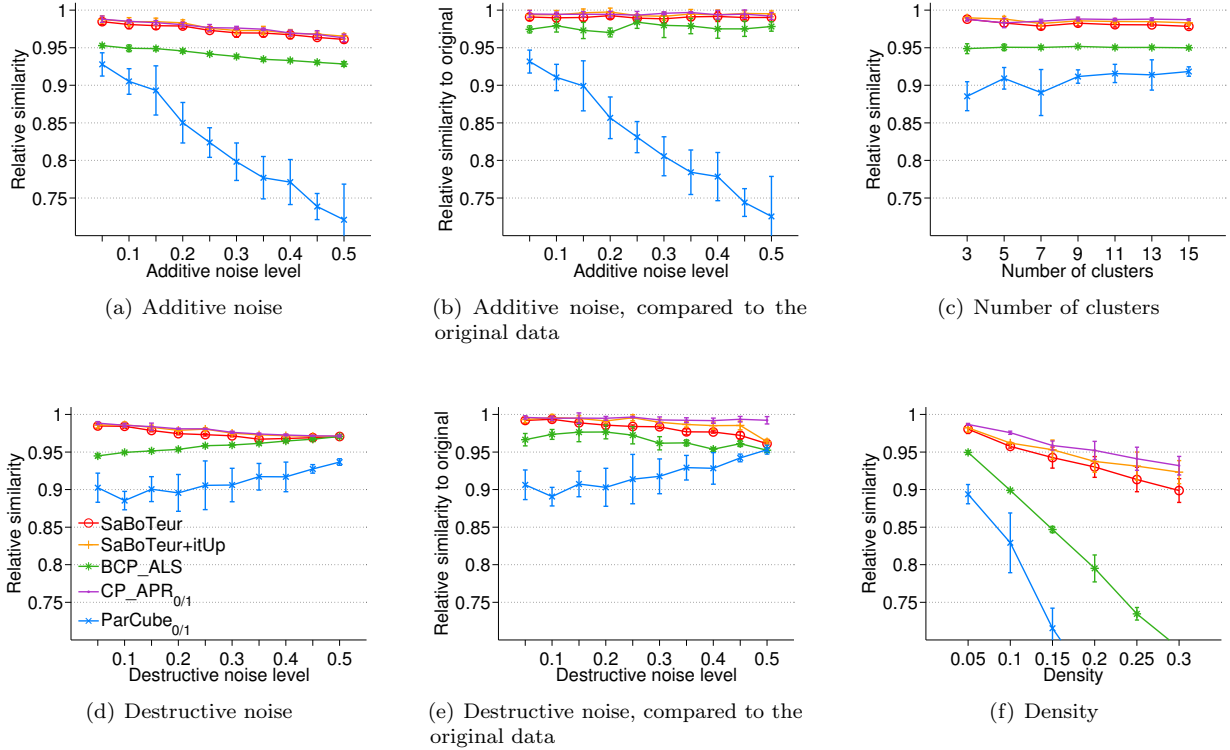

  \centering
  \subfigure[Additive noise\label{fig:synth:additive}]{\includegraphics[width=\subfigwidth]{synth_additive}}
  \hspace{\subfigspace}
  \subfigure[Additive noise, compared to the original data\label{fig:synth:additive:orig}]{\includegraphics[width=\subfigwidth]{synth_additive_orig}}
  \hspace{\subfigspace}
  \subfigure[Number of clusters\label{fig:synth:k}]{\includegraphics[width=\subfigwidth]{synth_clusters}}
  \\[1em]
  \subfigure[Destructive noise\label{fig:synth:destructive}]{\includegraphics[width=\subfigwidth]{synth_destructive}}
  \hspace{\subfigspace}
  \subfigure[Destructive noise, compared to the original data\label{fig:synth:destructive:orig}]{\includegraphics[width=\subfigwidth]{synth_destructive_orig}}
  \hspace{\subfigspace}
  \subfigure[Density\label{fig:synth:density}]{\includegraphics[width=\subfigwidth]{synth_density}}
  \caption{Results from synthetic experiments when varying different data characteristics. All $y$-axes show the relative similarity.
%; for (b) and (e), the similarity is with respect to the original data; for the others, with respect to the noisy data. 
All markers are mean values over $5$ iterations and the width of the error bars is twice the standard deviation.}
  \label{fig:synth:characteristics}
\end{figure}

\subsubsection{Varying the Number of Clusters}
\label{sec:vary-numb-clust}
The number of clusters varied from $3$ to $15$ with steps of $2$. The results are shown in Figure~\ref{fig:synth:k}.
None of the tested methods show any significant effect to the number of clusters, and the best three methods are still \Saboteur, \SaboteurIU, and \CPAPRr.

\subsubsection{Varying the Density}
\label{sec:varying-density}
The density of the tensor varied from $5\%$ to $30\%$ with steps of $5\%$. The results can be seen in Figure~\ref{fig:synth:density}. All methods perform worse with denser data, with \CPAPRr, \SaboteurIU, and \Saboteur being the best three methods in that order. \BCPALS and \ParCube's results quickly went below $75\%$ similarity, \ParCuber going as low as $55\%$ with $30\%$ density (we omit the worst results from the plots for clarity). 
Note that an increased density implies higher amounts of noise as the level of noise is defined with respect to the number of ones.

\subsubsection{Scalability}
\label{sec:scalability}
We run the scalability tests on a dedicated machine with two Intel Xeon X5650 6-Core CPUs at 2.66GHz and 64GB of main memory. All reported times are wall-clock times. For these experiments, we created a new set of tensors with a default size of \byby{800}{800}{500}, a density of $5\%$, $10\%$ of additive and destructive noise, and $20$ clusters by default. During the experiments, we varied the size, the number of clusters, and the number of threads used for the computation.

We also tried \ParCube, \CPAPR, and \BCPALS with these data sets, but only \ParCube\ was able to finish any of the data sets. However, already with \byby{200}{200}{500} data, the smallest we tried, it took $155.4$ seconds. With $10$ clusters, the least number we used, \ParCube\ finished only after $1319.4$ seconds. Therefore, we omitted these results from the figures.

As can be seen in Figure~\ref{fig:synth:scale:k}, the \Saboteur algorithm scales very well with the number of clusters. This is mostly due to efficient parallelization of the computing of the clusters (c.f.\ below). \SaboteurIU, however, slows down with higher number of clusters. This is to be expected, given that more clusters require more update computations.

\begin{figure}[tb]
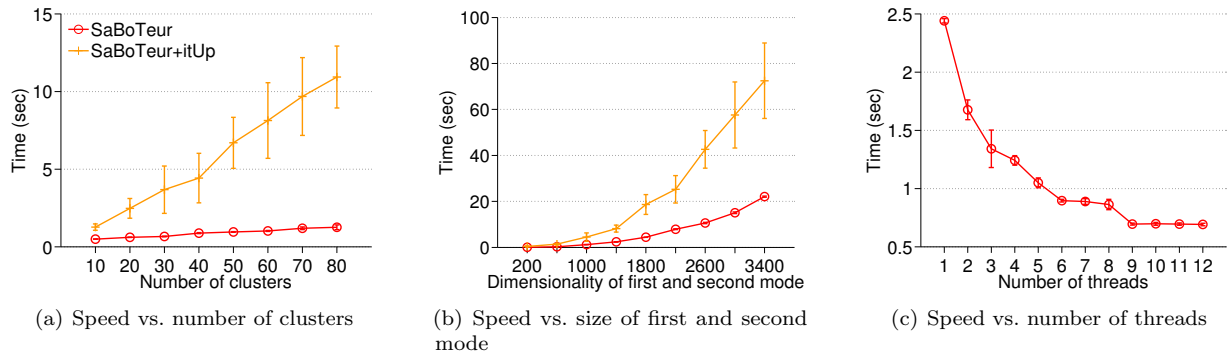

  \centering
  \subfigure[Speed vs.\ number of clusters\label{fig:synth:scale:k}]{\includegraphics[width=\subfigwidth]{synth_scalability_k}}
  \hspace{\subfigspace}
  \subfigure[Speed vs.\ size of first and second mode\label{fig:synth:scale:n}]{\includegraphics[width=\subfigwidth]{synth_scalability_n}}
  \hspace{\subfigspace}
  \subfigure[Speed vs.\ number of threads\label{fig:synth:scale:threads}]{\includegraphics[width=\subfigwidth]{synth_scalability_cpus}}
  \caption{Results from scalability tests. All times are wall-clock times. Markers are mean values over $5$ iterations and the width of the error bars is twice the standard deviation.}
   \label{fig:synth:scale}
\end{figure}

For the second experiment, we varied the dimensionality of the first and second mode between $200$ and $3400$ with steps of $400$. The results can be seen in Figure~\ref{fig:synth:scale:n}. As we grew both modes simultaneously, the size of the tensor grows as a square (and, as the density was kept constant, so does the number of non-zeros). Given this, the running time of \Saboteur grows as expected, or even slower, while \SaboteurIU is again clearly slower.

In the last scalability experiment (Figure~\ref{fig:synth:scale:threads}), we tested how well \Saboteur parallelizes. As the computer used had $12$ cores, we set that as the maximum number of threads. Yet, after $9$ threads, there were no more obvious benefits. We expect that the memory bus is the limiting factor here.  

\subsubsection{Generalization Tests}
\label{sec:generalization-test}
In our final test with synthetic data, we tested how well \SaboteurIU's clusterings generalize to yet-unseen data. Our hypothesis is that restricting the centroids to rank-1 matrices helps with the overfitting, and hence we compared \SaboteurIU to unrestricted Boolean tensor clustering (BTC), where the centroids are arbitrary binary matrices. Notice that Unrestricted BTC will always obtain at least as good reconstruction error on the training data as \SaboteurIU.

We generated tensors of size \byby{700}{500}{300} with $7$ clusters, $15\%$ of density and different levels of both additive and destructive noise, from $10\%$ till $50\%$. We randomly selected $25\%$ of the frontal slices as the test data and computed the clusterings on the remaining data. We then connected each frontal slice in the test set to its closest centroid. 

\begin{figure}
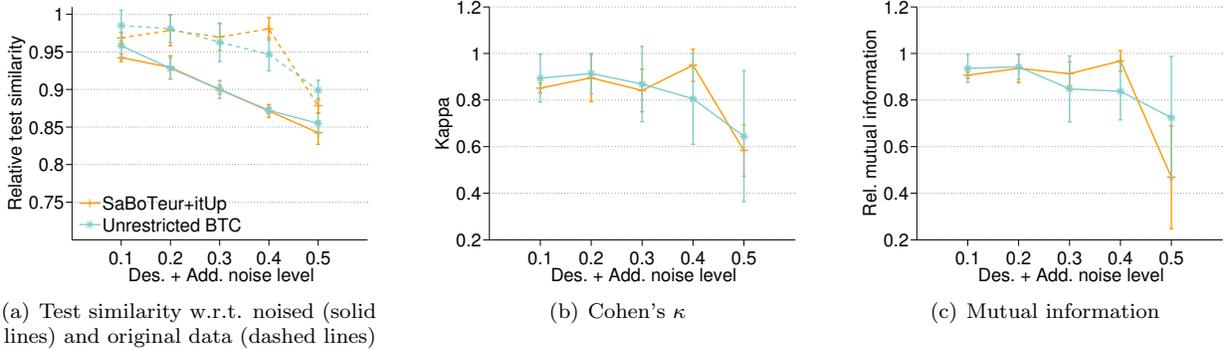

  \centering
  \subfigure[Test similarity w.r.t. noised (solid lines) and original data (dashed lines)\label{fig:synth:overfitting:error}]{\includegraphics[width=\subfigwidth]{synth_overfitting_mixed}}
  \hspace{\subfigspace}
  \subfigure[Cohen's $\kappa$\label{fig:synth:overfitting:kappa}]{\includegraphics[width=\subfigwidth]{clustering_kappa}}
  \hspace{\subfigspace}
  \subfigure[Mutual information\label{fig:synth:overfitting:mutual}]{\includegraphics[width=\subfigwidth]{clustering_mutual_inf}}
  \caption{Evaluation of the generalization quality using (a) reconstruction error on test data, and (b) Kappa statistic and (c) mutual information of cluster labels between the obtained and true labels in the test data. Markers are mean values over 5 different data sets and the width of the error bars is twice the standard deviation.}
  \label{fig:synth:ovefitting}
\end{figure}

The quality of the results was measured in two different ways. First, we used the reconstruction error in the test data (Figure~\ref{fig:synth:overfitting:error}) and computed the overall similarity of expressing the testing data using the centroids. We also used the original (i.e.\ noise-free) frontal slices to assess whether we `model the noise'. The results show that with the noise-free test data (dashed lines), \SaboteurIU is better than the unrestricted BTC with $30$--$40\%$ noise, and the results are reversed with $10\%$ and $50\%$ of noise. With noisy test data, the differences are less.

We also measured the similarity of the cluster labels w.r.t. the ground truth. We used two metrics for that, Cohen's $\kappa$ statistic (Figure~\ref{fig:synth:overfitting:kappa}) and normalized mutual information (Figure~\ref{fig:synth:overfitting:mutual}). Cohen's $\kappa$ takes values from $[-1,1]$, $-1$ meaning complete disagreement and $1$ meaning complete agreement. As Cohen's $\kappa$ requires the clusters' labels to match, we paired the labels using bipartite maximum matching (we used the Hungarian method~\citep{papadimitriou98combinatorial} to compute the matching). The mutual information was normalized by dividing it with the joint entropy; the resulting statistic takes values from $[0,1]$, with $1$ meaning perfect agreement. 

The two measures agree that \SaboteurIU is better at $40\%$ of noise and worse with $10\%$ and $50\%$ of noise. With $10\%$ of noise, the difference is not significant with either metric, and with $50\%$ of noise, the difference is significant only with normalized mutual information. 

Overall, the results show that \SaboteurIU has a small advantage over the unrestricted BTC. Perhaps the clearest sign of the benefits of the rank-1 centroids is the consistently smaller standard deviation \SaboteurIU obtains. This suggests that \SaboteurIU is less susceptible to random variations in the data, which should improve the generalization quality. Indeed, our generalization experiments with the real-world data (Section~\ref{sec:real:overfitting}) support this.

% \subsubsection{Discussion.}
% The synthetic experiments confirm that \Saboteur\ is capable of recovering the latent cluster structure from the synthetic data sets. Arguably the most surprising result of the synthetic experiments was that \Saboteur\ was consistently better than \BCPALS, even though the latter has more freedom to obtain better solutions.

\subsection{Real-World Data}
\label{sec:real-world-data}

We tested \Saboteur also with multiple real-world data sets of varying characteristics. The purpose of these experiments is to verify our findings with synthetic data. To that end, we studied how well \Saboteur (and the other methods) can reconstruct the data, how the methods scale with real-world data, and how \Saboteur generalizes to unknown data. In addition, we also studied the sparsity of the factors, using MDL to select the number of clusters, and how interpretable the results \Saboteur provides are.

\subsubsection{Data Sets}
\label{sec:datasets}
We used nine real-world data sets. The size and density for each data set is listed in Table~\ref{tab:datasets}.
The \Delicious\ data\footnote{\url{http://grouplens.org/datasets/hetrec-2011}\label{fn:hetrec}} \citep{cantador2011workshop} contains user--bookmark--tag tuples from the Delicious social bookmarking system\footnote{\url{http://www.delicious.com}}.
The \Enron\ data\footnote{\url{http://www.cs.cmu.edu/~enron/}} contains information about who sent an e-mail to whom (rows and columns) per months (tubes). 
The \Facebook\ data set\footnote{\url{http://socialnetworks.mpi-sws.org/datasets.html}} \citep{viswanath2009on} contains information about who posted a message on whose wall per week.
The \Lastfm\ data\footref{fn:hetrec} \citep{cantador2011workshop}  consists of artist--user--tag tuples from the \Lastfm online music system\footnote{\url{http://www.last.fm}}.
The \Movielens\ data set and the \Mag\ data set are obtained from the same source\footref{fn:hetrec} \citep{cantador2011workshop}. While \Movielens\ is composed of user--movie--tag tuples, \Mag\ consists of movies--actors--genres tuples.
The \Resolver\ data contains entity--relation--entity tuples from the TextRunner open information extraction algorithm\footnote{\url{http://www.cis.temple.edu/~yates/papers/jair-resolver.html}} \citep{yates09unsupervised}. We used the sample called \ResolverL in~\citet{miettinen11boolean}.
The \TracePort\ data set\footnote{\url{http://www.caida.org/data/passive/passive_2009_dataset.xml}} contains anonymized passive traffic traces (source and destination IP and port numbers) from 2009. 
The \Yago data set consist of  subject--object--relation tuples from the semantic knowledge base \Yago\footnote{\url{http://www.mpi-inf.mpg.de/yago-naga/yago}} \citep{suchanek2007yago}.

As a preprocessing step we removed all the all-zero slices in every mode. In addition, for the \Delicious\ data set, also all slices that had less than 6 entries were purged. For the \Mag\ data, all actors that appeared in less than 5 movies were discarded.

% with content of the datasets  --- too wide unfortunately 
% \begin{table}[t]
%  \centering
%  \small
%\begin{tabular}{@{}l@{\hspace{0.6em}}c@{\hspace{0.6em}}r@{\hspace{0.6em}}r@{\hspace{0.6em}}r@{\hspace{0em}}r@{}}
%    \toprule
%Dataset & Content & \multicolumn{3}{c}{Dimensionality}  & \multicolumn{1}{c}{$d$($*10^{-7}$)} \\ 
%\midrule
%\Delicious & user--bookmark--tag & 1640 & 23584 & 7968 & 8.23 \\ 
%\Enron & sender--recipient--month & 146 & 146 & 38 & 22752.86 \\ 
%\Facebook & sender--recipient--week & 42390 & 39986 & 224 & 16.51 \\ 
%\Lastfm & artist--user--tag & 1892 & 12523 & 9749 & 8.07 \\ 
%\Movielens & user--movie--tag & 2113 & 5908 & 9079 & 4.23 \\ 
%\Mag & movies--actors--genres & 65133 & 10673 & 20 & 162.25 \\ 
%\Resolver & entity--entity--relation & 343 & 360 & 200 & 626.01 \\ 
%\TracePort & source IP--dest IP--port & 10266 & 8622 & 501 & 2.51 \\ 
%\Yago & subject--object--relation & 190962 & 62428 & 25 & 67.35 \\ 
%    \bottomrule 
%\end{tabular}
% \caption{Summary of datasets}
%   \label{tab:datasets}
% \end{table}

 \begin{table}[tb]
\topcaption{Size and density of the real-world datasets.}
   \label{tab:datasets}
  \centering
  \small
\begin{tabular}{@{}l@{\hspace{3em}}rrrr@{}}
    \toprule
Dataset & %\multicolumn{3}{c}{Dimensionality}  
Rows & Columns & Tubes & Density ($10^{-7}$) \\ 
\midrule
\Delicious & 1640 & 23584 & 7968 & 8.23 \\ 
\Enron & 146 & 146 & 38 & 22752.86 \\ 
\Facebook  & 42390 & 39986 & 224 & 16.51 \\ 
\Lastfm  & 1892 & 12523 & 9749 & 8.07 \\ 
\Movielens & 2113 & 5908 & 9079 & 4.23 \\ 
\Mag  & 10197 & 10673 & 20 & 1036.37 \\ 
\Resolver  & 343 & 360 & 200 & 626.01 \\ 
\TracePort  & 10266 & 8622 & 501 & 2.51 \\ 
\Yago  & 190962 & 62428 & 25 & 67.35 \\ 
    \bottomrule 
\end{tabular}
 \end{table}

%%% Local Variables: 
%%% mode: latex
%%% TeX-master: "main"
%%% End: 

\subsubsection{Reconstruction Accuracy}
\label{sec:real:accuracy}
As all real-world data sets are very sparse, we report the errors, not the similarity, for these experiments. The error is measured as the number of disagreements, when the reconstructed tensor is also binary, or as squared tensor Frobenius distance, when the reconstructed tensor is real-valued (not-rounded versions of \CPAPR and \ParCube). The results can be seen in Table~\ref{tab:real:results}. The results for \CPAPRr and \ParCuber for all data sets except \Enron and \Resolver are obtained by sampling: If $\abs{\tens{X}}$ is the number of non-zeros in data $\tens{X}$, we sampled $200\abs{\tens{X}}$ locations of $0$s, and computed the error the methods make for every non-zero and for the sampled zero elements. The sampled results were then extrapolated to the size of the full tensor. We had to use sampling, as the factor matrices were too dense to allow reconstructing the full tensor. We will discuss more on density below.

\begin{sidewaystable}
%% Sidewaystable doesn't calculate the size of the box correctly and/or rotates from the bottom of the table. This vspace is needed to push the table back in the page.
%\vspace*{10cm}
\topcaption{Reconstruction errors on real data sets, measured as squared tensor Frobenius distance. `---' means that the algorithm was not able to finish.}
\label{tab:real:results} 

\small

\centering

\begin{tabular}{@{}l
	r@{\hspace{0.6em}}r@{\hspace{0.6em}}r@{\hspace{0.6em}}r
	r@{\hspace{0.6em}}r@{\hspace{0.6em}}r@{\hspace{0.6em}}r
	r@{\hspace{0.6em}}r@{\hspace{0.6em}}r@{\hspace{0.6em}}r
	r@{\hspace{0.6em}}r@{}}
    \toprule
\multicolumn{1}{r}{}	&	\multicolumn{ 4}{c}{\Delicious}	&							\multicolumn{ 4}{c}{\Enron}	&							\multicolumn{ 4}{c}{\Lastfm}	&					\multicolumn{ 2}{c}{\Mag}					\\
\cmidrule(r){2-5}																												
\cmidrule(rl){6-9}																												
\cmidrule(rl){10-13}																												
\cmidrule(l){14-15}																												
						
\multicolumn{1}{r}{$k=$}	&	\multicolumn{1}{c}{$7$}	&	\multicolumn{1}{c}{\hspace{-0.6em}$10$}	&	\multicolumn{1}{c}{\hspace{-0.6em}$15$}	&	\multicolumn{1}{c}{\hspace{-0.6em}$30$}	&	\multicolumn{1}{c}{$5$}	&	\multicolumn{1}{c}{\hspace{-0.6em}$10$}	&	\multicolumn{1}{c}{\hspace{-0.6em}$12$}	&	\multicolumn{1}{c}{\hspace{-0.6em}$15$}	&	\multicolumn{1}{c}{$8$}	&	\multicolumn{1}{c}{$15$}	&	\multicolumn{1}{c}{$20$}	&	\multicolumn{1}{c}{$30$}	&	\multicolumn{1}{c}{$5$}	&	\multicolumn{1}{c}{$7$}			\\
    \midrule																															
\Saboteur	&	$253608$	&	$253546$	&	$253447$	&	$253004$	&	$1811$	&	$1779$	&	$1769$	&	$1753$	&	$195905$	&	$195735$	&	195570	&	$195278$	&	$224322$	&	$223580$			\\
\SaboteurIU	&	$253153$	&	$253293$	&	$252738$	&	$252859$	&	$1793$	&	$1756$	&	$1750$	&	$1735$	&	$186072$	&	$185713$	&	185651	&	$185399$	&	$223869$	&	$223489$			\\
\BCPALS	&	---	&	---	&	---	&	---	&	$1850$	&	$1850$	&	$1850$	&	$1850$	&	---	&	---	&	---	&	---	&	---	&	---			\\
\CPAPR	&	$253558$	&	$253516$	&	$253433$	&	$253196$	&	$1718$	&	$1631$	&	$1598$	&	$1560$	&	$184513$	&	$184038$	&	$183489$	&	$182659$	&	$224996$	&	$224696$			\\
\CPAPRr	&	$253653$	&	$253653$	&	$253652$	&	$253639$	&	$1838$	&	$1817$	&	$1811$	&	$1781$	&	$180766$	&	$177527$	&	$174491$	&	$170939$	&	$225580$	&	$225155$			\\
\ParCube	&	$367521$	&	$373710$	&	$363581$	&	$418252$	&	$2137$	&	$2273$	&	$2352$	&	$2270$	&	$2745259$	&	$2879381$	&	$2856941$	&	$2770361$	&	$375290$	&	$322121$			\\
\ParCuber	&	$247129$	&	$246566$	&	$246016$	&	$241850$	&	$1802$	&	$1744$	&	$1751$	&	$1690$	&	$133057$	&	$139207$	&	$119461$	&	$131322$	&	$222133$	&	$221933$			\\
\walknmerge	&	---	&	$251034$	&	---	&	---	&	---	&	---	&	$1753$	&	---	&	---	&	---	&	---	&	---	&	---	&	---			\\
    \bottomrule 
\end{tabular}

\vspace{0.5em}
\begin{tabular}{@{}l@{\hspace{0.6em}}
	r@{\hspace{0.6em}}r@{\hspace{0.6em}}r@{\hspace{0.6em}}r
	r@{\hspace{0.6em}}r@{\hspace{0.6em}}r@{\hspace{0.6em}}r
	r@{\hspace{0.6em}}r@{\hspace{0.6em}}r@{\hspace{0.6em}}r
	r@{\hspace{0.6em}}r@{\hspace{0.6em}}
	r@{}}
    \toprule
\multicolumn{1}{r}{}	&	\multicolumn{ 4}{c}{\Movielens}							&	\multicolumn{ 4}{c}{\Resolver}							&	\multicolumn{ 4}{c}{\TracePort}									&	\multicolumn{ 2}{c}{\Facebook}	&	\multicolumn{ 1}{c}{\Yago} \\ 
\cmidrule(r){2-5}
\cmidrule(rl){6-9}			
\cmidrule(rl){10-13}			
\cmidrule(rl){14-15}																												
\cmidrule(rl){16-16}																												
	
\multicolumn{1}{r}{$k=$}	&	\multicolumn{1}{c}{$5$}	&	\multicolumn{1}{c}{$15$}	&	\multicolumn{1}{c}{$20$}	&	\multicolumn{1}{c}{$30$}	&	\multicolumn{1}{c}{$5$}	&	\multicolumn{1}{c}{$10$}	&	\multicolumn{1}{c}{$15$}	&	\multicolumn{1}{c}{$30$}	&	\multicolumn{1}{c}{$5$}	&	\multicolumn{1}{c}{$10$}	&	\multicolumn{1}{c}{$15$}	&	\multicolumn{1}{c}{$30$}	&	\multicolumn{1}{c}{$15$}	&	\multicolumn{1}{c}{$20$}	&	\multicolumn{1}{c}{$7$} \\ 
    \midrule																														
\Saboteur	&	$56761$	&	$56712$	&	$56678$	&	$56571$	&	$1530$	&	$1509$	&	$1485$	&	$1455$	&	$11085$	&	$11004$	&	$10923$	&	$10776$	&	$626999$	&	$626776$	&	$1957737$ \\ 
\SaboteurIU	&	$47933$	&	$47479$	&	$47280$	&	$47316$	&	$1522$	&	$1503$	&	$1457$	&	$1443$	&	$10990$	&	$10961$	&	$10913$	&	$10673$	&	---	&	---	&	--- \\ 
\BCPALS	&	---	&	---	&	---	&	---	&	$1624$	&	$1624$	&	$1626$	&	$1632$	&	---	&	---	&	---	&	---	&	---	&	---	&	--- \\ 
\CPAPR	&	$47584$	&	$46927$	&	$46555$	&	$46287$	&	$1519$	&	$1509$	&	$1489$	&	$1457$	&	$11140$	&	$11114$	&	$11082$	&	$11027$	&	$626343$	&	$626202$	&	--- \\ 
\CPAPRr	&	$47178$	&	$44655$	&	$43614$	&	$42609$	&	$1545$	&	$1545$	&	$1538$	&	$1539$	&	$11110$	&	$11059$	&	$10893$	&	$10617$	&	$626945$	&	$626945$	&	--- \\ 
\ParCube	&	$252719$	&	$463025$	&	$454826$	&	$449534$	&	$1784$	&	$1762$	&	$1798$	&	$1939$	&	$19803$	&	$30793$	&	$31107$	&	$29796$	&	$750891$	&	$797534$	&	$619476763$ \\ 
\ParCuber	&	$41223$	&	$38098$	&	$37915$	&	$37393$	&	$1507$	&	$1471$	&	$1450$	&	$1370$	&	$10708$	&	$9913$	&	$10097$	&	$9647$	&	$619939$	&	$620894$	&	$4793390$ \\ 
\walknmerge	&	---	&	$46807$	&	---	&	---	&	---	&	---	&	$1534$	&	---	&	---	&	---	&	$10679$	&	---	&	---	&	---	&	--- \\ 
    \bottomrule 
\end{tabular}

\end{sidewaystable}

%%% Local Variables: 
%%% mode: latex
%%% TeX-master: "main"
%%% End: 

The smallest reconstruction error is obtained by \ParCuber in almost all experiments, \Yago being a notable exception. Remember, however, that \ParCuber returns a non-negative tensor CP decomposition that is afterwards rounded to binary tensor, that is, it is neither clustering nor Boolean, and hence together with \CPAPRr is expected to be better than \Saboteur. Also, without the rounding, \ParCube is often the worst method by a large margin. \CPAPR benefits much less from the rounding, \CPAPRr being comparable to \Saboteur. 

In those data sets where we were able to run \BCPALS, its results were consistently worse than \Saboteur's results, despite it solving more relaxed problem. We were unable to get \walknmerge to finish within a reasonable time with most data sets: it found rank-1 tensors sufficiently quickly, but took too much time to select the top ones from there (see~\citet{erdos13walknmerge} for explanation on how \walknmerge finds a Boolean CP decomposition). When it did finish, however, it was slightly better than \SaboteurIU or \Saboteur. 

Finally, the iterative updates of \SaboteurIU consistently improved the results compared to \Saboteur, but did so with significant increase to the running time. It seems that with most data sets, the iterative updates are not necessarily worth the extra wait.

\subsubsection{Sparsity of the Factors}
\label{sec:real:sparsity}
An important question on the practical feasibility of the tensor decomposition and clustering algorithms is the density of the factor matrices. Too dense factor matrices increase the computational complexity and also the storage requirements. Furthermore, multiplying the dense factors together becomes prohibitively expensive, making it impossible to fully re-construct the tensor. This is the reason why, for example, we had to use sampling to compute the rounded representations of \CPAPR and \ParCube.

We studied the sparsity of the factors using the \Delicious data. In Table~\ref{tab:density} we report the total densities of the matrices $\matr{A}$ and $\matr{B}$, that is, $(\abs{\matr{A}} + \abs{\matr{B}})/k(n+m)$ for \by{n}{k} and \by{m}{k} factors. 

 \begin{table}[tb]
   \topcaption{Total density of the factor matrices $\matr{A}$ and $\matr{B}$ for different $k$ on the \Delicious data.}
   \label{tab:density}
   \centering
   \small
   \begin{tabular}{@{}lrrrr@{}}
     \toprule
\multicolumn{1}{r}{$k=$} &\multicolumn{1}{c}{7} & \multicolumn{1}{c}{10} & \multicolumn{1}{c}{15} & \multicolumn{1}{c}{30} \\ 
     \midrule
 \Saboteur & 0.00016 & 0.00031 & 0.00029 & 0.00029 \\
% \SaboteurIU & 0.0000004 & 0.0000010 & 0.0000008 & 0.0000016 \\ 
 \CPAPR & 0.20219 & 0.14786 & 0.10312 & 0.05396 \\ 
 \ParCube & 0.28186 & 0.21069 & 0.16658 & 0.09900 \\ 
 \walknmerge & ---& 0.00022 &--- & --- \\ 
% \Saboteur & 0.0001574 & 0.0003114 & 0.0002888 & 0.0002874 \\
% \SaboteurIU & 0.0000004 & 0.0000010 & 0.0000008 & 0.0000016 \\ 
% \CPAPR & 0.2021884 & 0.1478631 & 0.1031161 & 0.0539632 \\ 
% \ParCube & 0.2818631 & 0.2106882 & 0.1665768 & 0.0990023 \\ 
% \walknmerge & ---& 0.0002225 &--- & --- \\ 
     \bottomrule 
\end{tabular}
 
 \end{table}

It is obvious that the Boolean methods, \Saboteur
%, \SaboteurIU, 
and \walknmerge, are orders of magnitude sparser than the continuous methods (\CPAPR and \ParCube). This was to be expected as similar behaviour has already been observed with Boolean matrix factorizations \citep{miettinen10sparse}. 

We did not compare the third-mode factor matrices for densities. For \Saboteur, the clustering assignment matrix $\matr{C}$ has density $1/l$ that depends only on the number of frontal slices $l$; obtaining density less than that requires having rows of $\matr{C}$ that have no non-zeros.

\subsubsection{Scalability with Real-World Data}
\label{sec:real:scale}
Table~\ref{tab:real:timings} shows the wall-clock times for \Movielens and \Resolver with $k=15$. \Movielens is one of the sparsest 
data sets, while \Resolver is one of the densest ones, and hence these two show us how the relative speed of the methods changes as the density changes. The other data sets generally followed the behaviour we report here, adjusting to their density. 

  \begin{table}[tb]
  \topcaption{Average wall-clock running times in seconds for real-world data sets. Averages are over $5$ re-starts for \Movielens and $10$ for \Resolver. Both data sets used $k=15$ clusters.}
    \label{tab:real:timings}
   \centering
   \small
 \begin{tabular}{@{}lrr@{}}
     \toprule
  & \Movielens & \Resolver  \\ 
 \midrule
 \Saboteur & 45.6	&	0.02 \\ 
 \SaboteurIU &	303.6	&	0.11 \\ 
 \CPAPR &	196.6	&	22.00 \\ 
 \ParCube &	24.8	&	 7.68 \\ 
 \walknmerge &	207.5	&	2.15 \\ 
     \bottomrule 
 \end{tabular}
  \end{table}

%%% Local Variables: 
%%% mode: latex
%%% TeX-master: "main"
%%% End: 

With the\Movielens data, \ParCube is the fastest of the methods, followed by \Saboteur. \CPAPR, \walknmerge, and \SaboteurIU are significantly slower than the two. With the denser \Resolver data, however, the order is changed, with \Saboteur and \SaboteurIU being an order of magnitude faster than \walknmerge, which still is faster than \ParCube or \CPAPR. The density of the data does not affect the speed of \Saboteur, while it does have a significant effect to \CPAPR and \ParCube. \SaboteurIU is less affected by density, and more affected by the size of the data, than the other methods.

\subsubsection{Generalization with Real-World Data}
\label{sec:real:overfitting}
We repeated the generalization test we did with synthetic data with three real-world data sets -- \Movielens, \Resolver, and \TracePort\ -- keeping $85\%$ of the frontal slices as training data while the remaining $15\%$ we used for testing. The results can be seen in Table~\ref{tab:real:overfitting}. We used $k=15$ clusters for \Movielens\ and \TracePort, and $10$ clusters for the \Resolver\ data. On each data set we conducted $10$ repetitions of the training phase, and selected the one with the smallest training error.

%  \begin{table}[tb]
%   \centering
%   \small
% \begin{tabular}{@{}lrrr@{}}
%     \toprule
%  & \Movielens & \Resolver & \TracePort \\ 
% \midrule
% \Saboteur & 40075	&	1282	&	9730 \\ 
% Unrestricted BTC &	39351	&	1122	&	8843 \\ 
%     \bottomrule 
% \end{tabular}
%  \caption{Training error, data size $85\%$}
%    \label{tab:overfitting:train}
%  \end{table}

%  \begin{table}[t]
%   \centering
%   \small
% \begin{tabular}{@{}lrrr@{}}
%     \toprule
%  & \Movielens & \Resolver & \TracePort \\ 
% \midrule			
% \Saboteur & 7258	&	225	&	1144\\ 
% Unrestricted BTC &	8256	&	274	&	1554 \\ 
%     \bottomrule 
% \end{tabular}
%  \caption{Testing error, data size $15\%$}
%    \label{tab:overfitting:test}
%  \end{table}

\begin{table}[tb]
 \topcaption{Training and testing errors on real-world data. The number of clusters was $k=15$ for \Movielens and \TracePort and $k=10$ for \Resolver.}
   \label{tab:real:overfitting}
  \centering
  \small
\begin{tabular}{@{}llrrr@{}}
    \toprule
&  & \Movielens & \Resolver & \TracePort \\ 
\midrule
Train & \SaboteurIU & 40075	&	1282	&	9730 \\ 
         & Unrestricted BTC &	39351	&	1122	&	8843 \\ 
\midrule
Test & \SaboteurIU & 7258	&	225	&	1144\\ 
        & Unrestricted BTC &	8256	&	274	&	1554 \\ 
\bottomrule 
\end{tabular}
 \end{table}

%%% Local Variables: 
%%% mode: latex
%%% TeX-master: "main"
%%% End: 

As expected, the Unrestricted BTC achieves lower training error, but larger testing error. This agrees with our synthetic experiments, and strongly indicates that using the rank-1 centroids helps to avoid overfitting.

\subsubsection{Selecting the Number of Clusters}
\label{sec:real:mdl}
We tested the use of MDL to select the number of clusters with two real-world data sets, \Enron and \Mag. To that end, we ran \Saboteur with every possible number of clusters (i.e.\ from having all slices in one cluster to having one cluster for each slice). Further, we computed the description length of representing the data with no clusters, i.e.\ having the full data explained in the $L(D\mid M)$ part. The description lengths for the different numbers of clusters are presented in Figure~\ref{fig:real:mdl}. For \Enron, the number of clusters that gave the smallest description length was $3$, while for \Mag it was $16$.

\begin{figure}[tb]
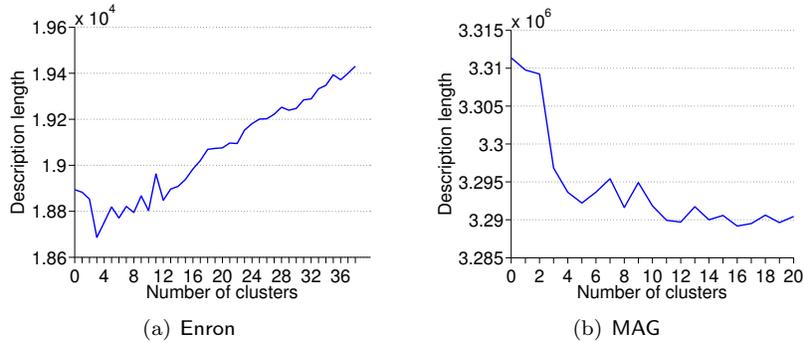

  \centering
  \subfigure[\Enron\label{fig:real:mdl:enron}]{\includegraphics[width=\subfigwidth]{mdl_enron}}
  \hspace{\subfigspace}
  \subfigure[\Mag\label{fig:real:mdl:mag}]{\includegraphics[width=\subfigwidth]{mdl_mag}}
  \caption{Description length vs. number of clusters.}
  \label{fig:real:mdl}
\end{figure}

Both \Enron and \Mag show non-smooth behaviour in the description length. This is likely partially due to \Saboteur's randomized behaviour and partially due to the fact that \Saboteur does not consider the description length when it builds the clustering. Nevertheless, especially with \Enron (Figure~\ref{fig:real:mdl:enron}), the selection of the best number of clusters -- in the MDL's sense -- seems clear.

\subsubsection{Interpretability of the Results}
\label{sec:real:anecdotal}
For the final experiment with the real-world data, we studied some of the results we obtained from \Saboteur in order to see if they can provide insights to the data. We report results from three data sets: \Mag, \Delicious, and \Lastfm.  For \Mag, we can interpret all three modes (movies, actors, and genres); for the other two, we can only interpret two of the three modes (tags and URLs, and tags and artists, respectively), as the third mode corresponds to user IDs. 

For, \Mag, we used $16$ clusters (as suggested by MDL), and clustered the genres. As there are only $20$ genres in the data, this means many clusters contained only one genre; this, however, is not surprising, as we would expect the sets of movies that fall in different genres be generally rather different. The results picked famous movies of the genre as the centroids: for example, the cluster of \emph{animation} and \emph{comedy} genres had the 1995 animation \emph{Toy Story} appearing in its centroid. Another cluster, containing genres \emph{adventure} and \emph{fantasy}, had as its centroid  Peter Jackson's \emph{Lord of the Rings} trilogy, and its main cast. Similarly to \emph{Toy Story}, \emph{Lord of the Rings} movies are arguably stereotypical adventure--fantasy movies.

The centroids for the \Mag data contained very few movies (e.g.\ the three \emph{Lord of the Rings} movies). This is due to the fact that the data sets are very sparse, and the algorithm can only pick up the most prominent elements in the centroids. With even sparse data sets, such as \Delicious and \Lastfm, the centroids get even sparse, perhaps containing just a singleton element from some mode. This is a common problem to many Boolean methods \citep[see, e.g.][]{miettinen08discrete}. To alleviate it, we used a method proposed by \citet{miettinen08discrete}, and weighted the reconstruction error. Normally, representing a $0$ as a $1$ and representing a $1$ as a $0$ both increase the error by $1$, but for the next experiments, we set the weight so that representing a $1$ with a $0$ causes $10$ times more error as representing $0$ as a $1$. 

For \Delicious and \Lastfm, we mined $30$ clusters. Even with the $10$-fold weight, results for \Delicious were rather sparse (indicating that the users' behaviour was very heterogeneous). Some clusters were very informative, however, such as one with tags \emph{software}, \emph{apple}, \emph{mac}, and \emph{osx} that contained bookmarked pages on \emph{HFS for Windows}, \emph{iVPN}, \emph{Object-Oriented Programming with Objective-C}, and \emph{iBooks and ePub}, among others. For \Lastfm, the clusters were generally more dense, as users tend to tag artists more homogeneously. For example, the cluster containing the tags \emph{pop} and \emph{rnb} contained the superstar artists such as Beyonc\'e, Britney Spears, Katy Perry, and Miley Cyrus (and many more). But importantly, one could also find less-known artists: in a cluster containing tags \emph{dance}, \emph{trance}, \emph{house}, \emph{progressive trance}, \emph{techno}, and \emph{vocal trance}, we found artists such as \emph{Infected Mushroom}, \emph{T.M.Revolution}, \emph{Dance Nation}, and \emph{RuPaul}.

\subsection{Discussion}
\label{sec:exp:discusion}

All the experiments show that \Saboteur can hold its own even against methods that are theoretically superior: For reconstruction error (or similarity) with both synthetic (Sections~\ref{sec:varying-noise}--\ref{sec:varying-density}) and real-world data (Section~\ref{sec:real:accuracy}), \Saboteur (and \SaboteurIU) achieve results that are the best or close to the best results, notwithstanding that other methods, especially \CPAPRr and \ParCuber, benefit from significantly less constrained forms of decomposition, coupled with the post-hoc rounding. These relaxations also come with a price, as both methods create significantly denser factor matrices, and -- as can be seen in Sections~\ref{sec:scalability} and~\ref{sec:real:scale} -- are unable to scale to denser data. Compared to the Boolean CP factorization methods, \BCPALS and \walknmerge, \Saboteur is comparable (or sometimes clearly better) in reconstruction error and density of the factor matrices, while being significantly more scalable.

Our hypothesis that the rank-1 centroids help with overfitting was also confirmed (Sections~\ref{sec:generalization-test} and~\ref{sec:real:overfitting}): While the Unrestricted BTC obtained lower training errors (as expected), \Saboteur obtained better results with testing data in all real-world data sets and in most synthetic experiments (with \TracePort, \Saboteur's testing error was more than $25\%$ better than that of Unrestricted BTC's).  

Our experiments also show that MDL can be used to select the number of clusters automatically, even though we do think that more studies are needed to better understand the behaviour of the description length. Also, when studying the results, it seems obvious that \Saboteur can return results that are easy to interpret and insightful; in part, this is due to the Boolean algebra, and in part, due to the clustering.

The last open question is whether one should use \Saboteur or \SaboteurIU. Judging by the results, \SaboteurIU can provide tangible benefits over \Saboteur, however, with a significant price in the running time. Thence, we think that it is best to start with \Saboteur, and only if the user thinks she would benefit from more accurate results, move to \SaboteurIU.

%%% Local Variables: 
%%% mode: latex
%%% TeX-master: "main"
%%% End: 

\section{Related Work}
\label{sec:relatedWork}

Tensor clusterings have received some research interest in recent years. The existing work can be divided roughly into two separate approaches: on one hand \citet{jegelka09approximation} study the problem of clustering simultaneously all modes of a tensor (tensor co-clustering), and on the other hand, \citet{huang08simultaneous} and \citet{liu10hybrid} (among others) study the problem where only one mode is clustered and the remaining modes are represented using a low-rank approximation. The latter form is closer to what we study in this paper, but the techniques used for the continuous methods do not apply to the binary case. 

Normal tensor factorizations are well-studied, dating back to the late Twenties. The Tucker and CP decompositions were proposed in the Sixties \citep{tucker66some} and Seventies \citep{carroll70analysis,harshman70foundations}, respectively. The topic has nevertheless attained growing interest in recent years, both in numerical linear algebra and computer science communities. For a comprehensive study of recent work, see \citet{kolda09tensor}, and the recent work on scalable factorizations by \citet{papalexakis12parcube}.

The first algorithm for Boolean CP factorization was presented by \citet{leenen99indclas}, although without much analysis. Working in the framework of formal tri-concepts, \citet{belohlavek12optimal} studied the exact Boolean tensor decompositions, while \citet{ignatov11triconcepts} studied the problem of finding dense rank-1 subtensors from binary tensors. In data mining \citet{miettinen11boolean}, studied computational complexity and sparsity of the Boolean tensor decompositions. Recently \citet{erdos13walknmerge} proposed a scalable algorithm for Boolean CP and Tucker decompositions, and applied that algorithm for information extraction~\citep{erdos13discovering}. The tri-concepts are also related to closed $n$-ary relations, that is, $n$-ary extensions of closed itemsets \citep[see, e.g.][]{cerf09closed,cerf13closed}. For more on these methods and their relation to Boolean CP factorization, see \citet{miettinen11boolean}.

\citet{miettinen08discrete} presented the Discrete Basis Partitioning problem for clustering binary data and using binary centroids. They gave a $(10+\varepsilon)$ approximation algorithm based on the idea that any algorithm that can solve the so-called binary graph clustering (where centroids must be rows of the original data) within factor $f$, can solve the arbitrary binary centroid version within factor $2f$. Recently, \citet{jiang14pattern} gave a 2-approximation algorithm for the slightly more general case with $k+1$ clusters with one centroid restricted to be an all-zero vector. 

The maximization dual of binary clustering, the hypercube segmentation problem, was studied by \citet{kleinberg98microeconomic,kleinberg04segmentation} and they also gave three algorithms for the problem, obtaining an approximation ratio of $2(\sqrt{2} - 1)$. Later, \citet{seppanen05upper} proved that this ratio is tight for this type of algorithms and \citet{alon99two} presented a PTAS for the problem.

Recently, \citet{kim11approximate} proposed the tensor\-/relational model, where relational information is represented as tensors. They defined the relational algebra over the (decompositions of) the tensors, noting that in the tensor-relational model, the tensor decomposition becomes the costliest operation. Hence, their subsequent work~\citep{kim12decomposition,kim14pushing-down} has concentrated on faster decompositions in the tensor-relational framework.

%%% Local Variables: 
%%% mode: latex
%%% TeX-master: "main"
%%% End: 

\section{Conclusions and Future Work}
\label{sec:conclusions}

We have studied the problem of clustering one mode of a 3-way binary tensor while simultaneously reducing the dimensionality in the two other modes. This problem bears close resemblance to the Boolean CP tensor decomposition, but the additional clustering constraint makes the problem significantly different. The main source of computational complexity, the consideration of overlapping factors in the tensor decomposition, does not play a role in BCPC. This lets us design algorithms with provable approximation guarantees better than what is known for the Boolean matrix and tensor decompositions.

Our experiments show that the \Saboteur algorithm obtains comparable reconstruction errors compared to the much less restricted tensor factorization methods while obtaining sparse factor matrices and overall best scalability. We also showed that restricting the cluster centroids to rank-1 matrices significantly helped the generalization to unseen data.

The idea of restricting the clustering centroids to a specific structure could be applied to other types of data, as well. Whether that will help with the curse of dimensionality remains an interesting topic for the future research, but based on the results we presented in this paper, we think this approach could have potential. On the other hand, one could also consider higher-rank centroids, moving from \BCPC\ towards Unrestricted BTC. In a sense, the rank of the centroid can be seen as a regularization parameter, but the viability of this approach requires more research, both in Boolean and in continuous domains.

We also argue that the similarity -- as opposed to distance or error -- is often more meaningful metric for studying the (theoretical) performance of data mining algorithms due to its robustness towards small errors and more pronounced effects of large errors. To assess whether similarity is better than dissimilarity, more methods, existing and novel, should be analysed, and the results should be contrasted to those obtained when studying the dissimilarity and real-world performance. Our hypothesis is that good theoretical performance in terms of similarity correlates better with good real-world performance than good (or bad) performance correlates with the error. 

Finally, our algorithms could be used to provide fast decompositions of Boolean tensors in the tensor-relational model.

%The essential piece, the maximum-similarity binary rank-1 approximation achieves an approximation ratio of $0.828$ in  $O(nm\min\{n,m\})$ time, and with that dominates the running time. Faster algorithms for the rank-1 approximation (with better approximation guarantees) would have an instant impact on \Saboteur.

%%% Local Variables: 
%%% mode: latex
%%% TeX-master: "main"
%%% End: 

%% spbasic is the style for natbib citations; spmpsci is the style for numbered citations
\bibliographystyle{spbasic}
\bibliography{library}  

\begin{thebibliography}{35}
\providecommand{\natexlab}[1]{#1}
\providecommand{\url}[1]{{#1}}
\providecommand{\urlprefix}{URL }
\expandafter\ifx\csname urlstyle\endcsname\relax
  \providecommand{\doi}[1]{DOI~\discretionary{}{}{}#1}\else
  \providecommand{\doi}{DOI~\discretionary{}{}{}\begingroup
  \urlstyle{rm}\Url}\fi
\providecommand{\eprint}[2][]{\url{#2}}

\bibitem[{Alon and Sudakov(1999)}]{alon99two}
Alon N, Sudakov B (1999) {On two segmentation problems}. J Algorithm
  33:173--184

\bibitem[{B{\v e}lohl{\'a}vek et~al(2012)B{\v e}lohl{\'a}vek, Glodeanu, and
  Vychodil}]{belohlavek12optimal}
B{\v e}lohl{\'a}vek R, Glodeanu C, Vychodil V (2012) {Optimal Factorization of
  Three-Way Binary Data Using Triadic Concepts}. Order 30(2):437--454

\bibitem[{Cantador et~al(2011)Cantador, Brusilovsky, and
  Kuflik}]{cantador2011workshop}
Cantador I, Brusilovsky P, Kuflik T (2011) {2nd Workshop on Information
  Heterogeneity and Fusion in Recommender Systems}. In: RecSys '11

\bibitem[{Carroll and Chang(1970)}]{carroll70analysis}
Carroll JD, Chang JJ (1970) {Analysis of individual differences in
  multidimensional scaling via an N-way generalization of
  {\textquotedblleft}Eckart--Young{\textquotedblright} decomposition}.
  Psychometrika 35(3):283--319

\bibitem[{Cerf et~al(2009)Cerf, Besson, Robardet, and Boulicaut}]{cerf09closed}
Cerf L, Besson J, Robardet C, Boulicaut JF (2009) {Closed patterns meet n-ary
  relations}. ACM Trans Knowl Discov Data 3(1)

\bibitem[{Cerf et~al(2013)Cerf, Besson, Nguyen, and Boulicaut}]{cerf13closed}
Cerf L, Besson J, Nguyen KNT, Boulicaut JF (2013) {Closed and noise-tolerant
  patterns in n-ary relations}. Data Min Knowl Discov 26(3):574--619

\bibitem[{Chi and Kolda(2012)}]{chi12tensors}
Chi EC, Kolda TG (2012) {On Tensors, Sparsity, and Nonnegative Factorizations}.
  SIAM J Matrix Anal Appl 33(4):1272--1299

\bibitem[{Dagum and Menon(1998)}]{dagum1998openmp}
Dagum L, Menon R (1998) {OpenMP: an industry standard API for shared-memory
  programming}. IEEE Comput Sci Eng Mag 5(1):46--55

\bibitem[{Erd{\H o}s and Miettinen(2013{\natexlab{a}})}]{erdos13discovering}
Erd{\H o}s D, Miettinen P (2013{\natexlab{a}}) {Discovering Facts with Boolean
  Tensor Tucker Decomposition}. In: CIKM '13, pp 1569--1572

\bibitem[{Erd{\H o}s and Miettinen(2013{\natexlab{b}})}]{erdos13walknmerge}
Erd{\H o}s D, Miettinen P (2013{\natexlab{b}})
  {Walk{\textquoteright}n{\textquoteright}Merge: A Scalable Algorithm for
  Boolean Tensor Factorization}. In: ICDM '13, pp 1037--1042

\bibitem[{Harshman(1970)}]{harshman70foundations}
Harshman RA (1970) {Foundations of the PARAFAC procedure: Models and conditions
  for an {\textquotedblleft}explanatory{\textquotedblright} multimodal factor
  analysis}. Tech. Rep.~16, {UCLA} Working Papers in Phonetics

\bibitem[{Huang et~al(2008)Huang, Ding, Luo, and Li}]{huang08simultaneous}
Huang H, Ding C, Luo D, Li T (2008) {Simultaneous tensor subspace selection and
  clustering: {The} equivalence of high order SVD and k-means clustering}. In:
  KDD '08, pp 327--335

\bibitem[{Ignatov et~al(2011)Ignatov, Kuznetsov, Magizov, and
  Zhukov}]{ignatov11triconcepts}
Ignatov DI, Kuznetsov SO, Magizov RA, Zhukov LE (2011) {From Triconcepts to
  Triclusters}. In: RSFDGrC '11, pp 257--264

\bibitem[{Jegelka et~al(2009)Jegelka, Sra, and
  Banerjee}]{jegelka09approximation}
Jegelka S, Sra S, Banerjee A (2009) {Approximation Algorithms for Tensor
  Clustering}. In: ALT '09, pp 368--383

\bibitem[{Jiang(2014)}]{jiang14pattern}
Jiang P (2014) {Pattern extraction and clustering for high-dimensional discrete
  data }. PhD thesis, University of Illinois at Urbana-Champaign

\bibitem[{Kim and Candan(2011)}]{kim11approximate}
Kim M, Candan KS (2011) Approximate tensor decomposition within a
  tensor-relational algebraic framework. In: CIKM '11, pp 1737--1742

\bibitem[{Kim and Candan(2012)}]{kim12decomposition}
Kim M, Candan KS (2012) Decomposition-by-normalization {(DBN): Leveraging}
  approximate functional dependencies for efficient tensor decomposition. In:
  CIKM '12, pp 355--364

\bibitem[{Kim and Candan(2014)}]{kim14pushing-down}
Kim M, Candan KS (2014) Pushing-down tensor decompositions over unions to
  promote reuse of materialized decompositions. In: ECML PKDD '14, pp 688--704

\bibitem[{Kleinberg et~al(1998)Kleinberg, Papadimitriou, and
  Raghavan}]{kleinberg98microeconomic}
Kleinberg J, Papadimitriou C, Raghavan P (1998) {A Microeconomic View of Data
  Mining}. Data Min Knowl Discov 2(4):311--324

\bibitem[{Kleinberg et~al(2004)Kleinberg, Papadimitriou, and
  Raghavan}]{kleinberg04segmentation}
Kleinberg JM, Papadimitriou CH, Raghavan P (2004) {Segmentation problems}. J
  ACM 51(2):263--280

\bibitem[{Kolda and Bader(2009)}]{kolda09tensor}
Kolda TG, Bader BW (2009) {Tensor decompositions and applications}. SIAM Rev
  51(3):455--500

\bibitem[{Leenen et~al(1999)Leenen, Van~Mechelen, De~Boeck, and
  Rosenberg}]{leenen99indclas}
Leenen I, Van~Mechelen I, De~Boeck P, Rosenberg S (1999) {INDCLAS: A three-way
  hierarchical classes model}. Psychometrika 64(1):9--24

\bibitem[{Liu et~al(2010)Liu, De~Lathauwer, Janssens, and
  De~Moor}]{liu10hybrid}
Liu X, De~Lathauwer L, Janssens F, De~Moor B (2010) {Hybrid Clustering of
  Multiple Information Sources via HOSVD}. In: ISNN '10, pp 337--345

\bibitem[{Miettinen(2010)}]{miettinen10sparse}
Miettinen P (2010) {Sparse Boolean Matrix Factorizations}. In: ICDM '10, pp
  935--940

\bibitem[{Miettinen(2011)}]{miettinen11boolean}
Miettinen P (2011) {Boolean Tensor Factorizations}. In: ICDM '11, pp 447--456

\bibitem[{Miettinen and Vreeken(2014)}]{miettinen14mdl4bmf}
Miettinen P, Vreeken J (2014) {MDL4BMF: Minimum} description length for
  {Boolean} matrix factorization. ACM Trans Knowl Discov Data 8(4)

\bibitem[{Miettinen et~al(2008)Miettinen, Mielik{\"a}inen, Gionis, Das, and
  Mannila}]{miettinen08discrete}
Miettinen P, Mielik{\"a}inen T, Gionis A, Das G, Mannila H (2008) {The Discrete
  Basis Problem}. IEEE Trans Knowl Data En 20(10):1348--1362

\bibitem[{Papadimitriou and Steiglitz(1998)}]{papadimitriou98combinatorial}
Papadimitriou CH, Steiglitz K (1998) {Combinatorial Optimization: Algorithms
  and Complexity}. Dover Publications, Mineola, New York

\bibitem[{Papalexakis et~al(2012)Papalexakis, Faloutsos, and
  Sidiropoulos}]{papalexakis12parcube}
Papalexakis EE, Faloutsos C, Sidiropoulos ND (2012) {ParCube: Sparse
  Parallelizable Tensor Decompositions}. In: ECML PKDD '12, pp 521--536

\bibitem[{Rissanen(1978)}]{rissanen78modeling}
Rissanen J (1978) Modeling by shortest data description. Automatica
  14(5):465--471

\bibitem[{Sepp{\"a}nen(2005)}]{seppanen05upper}
Sepp{\"a}nen JK (2005) {Upper bound for the approximation ratio of a class of
  hypercube segmentation algorithms}. Inform Process Lett 93(3):139--141

\bibitem[{Suchanek et~al(2007)Suchanek, Kasneci, and Weikum}]{suchanek2007yago}
Suchanek FM, Kasneci G, Weikum G (2007) Yago: {A} core of semantic knowledge.
  In: WWW '07, pp 697--706

\bibitem[{Tucker(1966)}]{tucker66some}
Tucker LR (1966) {Some mathematical notes on three-mode factor analysis}.
  Psychometrika 31(3):279--311

\bibitem[{Viswanath et~al(2009)Viswanath, Mislove, Cha, and
  Gummadi}]{viswanath2009on}
Viswanath B, Mislove A, Cha M, Gummadi KP (2009) On the evolution of user
  interaction in {Facebook}. In: WOSN '09, pp 37--42

\bibitem[{Yates and Etzioni(2009)}]{yates09unsupervised}
Yates A, Etzioni O (2009) {Unsupervised methods for determining object and
  relation synonyms on the web}. J Artif Intell Res 34:255{\textendash}296

\end{thebibliography}
\end{document}